\documentclass[10pt,a4paper]{article}
\usepackage[utf8]{inputenc}
\usepackage{amsmath}
\usepackage{amsfonts}
\usepackage{amssymb}
\usepackage{amsthm}
\usepackage{float}
\usepackage{aliascnt}
\usepackage{graphicx} 
\usepackage{enumitem}   
\newtheorem{lemma}{Lemma}

\newtheorem*{theorem}{Theorem}

\newtheorem*{remark}{Remark}

\title{ Solving a group equation  of length nine
}
\author{Muhammad Saeed Akram$^{1,*}$, Maira Amjid$^{2}$}

\date{$^{1}$Department of Mathematics, Ghazi University, Faculty of Science, Dera Ghazi Khanq 32200, Pakistan\\
$^{2}$Department of Mathematics, Khwaja Fareed University of
Engineering and Information Technology,
Rahim Yar Khan 64200, Pakistan\\
$^{*}$Correspondence: mrsaeedakram@gmail.com}
\begin{document}
\maketitle
\noindent \textbf{Abstract.}
 The Levin conjecture was proposed by Levin in 1962 which conjectures the solvability of any group equation with coefficients in a torsion free group. The Levin conjecture is recently shown to hold for group equations of length seven by weight test and curvature distribution. These methods are applied on a group equation of length nine to explore the validity of Levin conjecture. It is found that  the Levin's conjecture has an affirmative answer for this equation modulo some exceptional cases. 

%
\noindent \textbf{Keywords:}
Group equations; relative group presentations; asphericity; weight test; curvature distribution. \\
\textbf{2010 Mathematics Subject Classifications:} 20F05, 20E06, 57M05
\section{Introduction:}
Let $G$ be a non-trivial group and $t$ an element not in $G$. A \emph{group equation} or an \emph{equation} over $G$ is an equation of the form
$$s(t)=g_{1}t^{l_{1}}g_{2}t^{l_{2}}...g_{n}t^{l_{n}}=1 \quad (g_{i}\in G, l_{i}=\pm1)$$    
such that $l_{i}+l_{i+1}=0$ implies $g_{i+1}\neq 1 \in G$ (subscripts modulo $n$). The non-negative integer $n$ is known as the \emph{length} of equation $s(t)=1$. The equation $s(t)=1$ is said to be \emph{solvable} over $G$ if  $s(h)=1$ for some element $h$ of a group $H$ which contain $G$. Equivalently, $s(t)=1$ is solvable over $G$ if and only if the natural homomorphism from $G$ to $\frac{G * \langle t \rangle}{N}$ is injective, where $N$ is the normal closure of $s(t)=1$ in the free product $G * \langle t \rangle$. 
The equation $s(t)=1$ is called a \emph{singular} if $\sum^{n}_{i=1}{l_{i}}=0$ and \emph{non-singular} otherwise. 

The study of group equations was initiated by B. H. Neumann \cite{N} who solved an equation $t^{-1}g_1tg_2 =1$ over any torsion free group. Motivated by the solvability of the polynomial equations over fields, Levin \cite{L} studied the analogous problem for group equations and proved that the equation $s(t)=1$ ($l_{i}$ non-negative and not necessarily 1) is solvable over any group $G$, for $l_{1}+l_{2}+...+l_{n}=n$. These findings of Neumann and Levin gave the hope for the conjecture that any equation is solvable over a torsion free group, which is known as \emph{Levin's conjecture}. 
 
 There has been significant work to verify the Levin conjecture \cite{H,EH,E,IK} for group equations of length less than or equal to six. Recently, Mairaj and Edjvet \cite{BE} proved the Levin conjecture for all group equations of length seven 
 by using the weight test and curvature distribution method. By employing the methods used in \cite{BE}, Mairaj et al \cite{BAIA} have proved the conjecture  for a non-singular equation of length eight modulo one exceptional case.  The authors have done some significant work in \cite{ABA,ABI} using the weight test which  establishes the conjecture up to great extent for length eight. The equations of length nine are considered in \cite{B}, where it is proved that there are only three equations of length nine that are open. More recently, in \cite{FBA} and \cite{AAI}, the authors have investigated the conjecture for a non-singular equation of length nine (one of three) by applying these methods. For more recent work in this direction, see \cite{AH, ABA2, ABS}. In this paper, we consider the following group equation of length nine (one of the remaining two)
 $$s(t)=atbtctdtet^{-1}ftgthtit^{-1}=1$$
and investigate Levin's conjecture by applying the weight test and curvature distribution method. Here we prove Levin's conjecture for this equation modulo some exceptional cases.


\section{Methodology:}
\indent  A \emph{relative (group) presentation} is a presentation $\mathcal{P}=\langle G,x \mid r \rangle$ in which $r$ is a set of cyclically reduced words in $G * \langle x \rangle$. All the definitions concerning relative presentations can be seen in  \cite{BP}. 
Bogley and Pride established some sufficient conditions for the injectivity of natural map from $G$ to $\mathcal{P}=\langle G,x \mid r  \rangle$. They proved that the natural map from $G$ to $\mathcal{P}=\langle G,x \mid r  \rangle$ is injective if the relative presentation is orientable and aspherical \cite{BP}. In our case $s(t)=atbtctdtet^{-1}ftgthtit^{-1}$, therefore $x$ and $r$ consist of the single element $t$ and $s(t)$ respectively and so $\mathcal{P}=\langle G,t~|~s(t)\rangle$ is orientable. Therefore, in order to prove $s(t) = 1$ is solvable it is sufficient to prove asphericity of $\mathcal{P}$. Here we use weight test \cite{BP} and curvature distribution \cite{Ed} to establish asphericity of $\mathcal{P}$.

 All the necessary definitions concerning weight test can be found in \cite{BP}. The weight test states that if the star graph $\Gamma$ of $\mathcal{P}$ admits an aspherical weight function $\theta$, then $\mathcal{P}$ is aspherical \cite{BP}.  All the definitions related to pictures can be found in \cite{H}. The curvature distribution  asserts that if $K$ is a reduced picture over $\mathcal{P}$ then by Euler (or Gauss-Bonnet) formula, the sum of the curvature of all regions of $K$ is $4 \pi$, that is, $K$ contains regions of positive curvature \cite{EH}. Then, if for each region $\Delta$ of $K$ of positive curvature $c(\Delta)$, there is a neighbouring region $\widehat{\Delta}$, uniquely associated with $\Delta$, such that  $c(\widehat{\Delta}) + c(\Delta) \leq 0$, then the sum of the curvature of all regions of $K$ is non-positive, which implies that $\mathcal{P}$ is aspherical \cite{BE,B}.
 
Let $G$ be a torsion free group. By applying the transformation $u=tb$ on $s(t)=atbtctdtet^{-1}ftgthtit^{-1}=1$, it can be assumed that $b=1$. Recall that $\mathcal{P}=\langle G,t~|~s(t)\rangle$ where $$s(t)=atbtctdtet^{-1}ftgthtit^{-1} \quad (a,e,f,i \in G\setminus \{1\}, b=1,c,d,g,h \in G ).$$
 Furthermore, it can be assumed without any loss that $G$ is not cyclic and $G=\langle a,b,c,d,e,f,g,h,i \rangle$ \cite{H2}. Suppose
that $K$ is a reduced spherical diagram over $\mathcal{P}$. Up to
cyclic permutation and inversion, the regions of $K$ are given by $\Delta$ as shown in Figure \ref{1}(i). The star graph $\Gamma$ of $\mathcal{P}$ is given by Figure \ref{1}(ii).
\begin{figure}[H]
\centering
        \includegraphics[width=9cm]{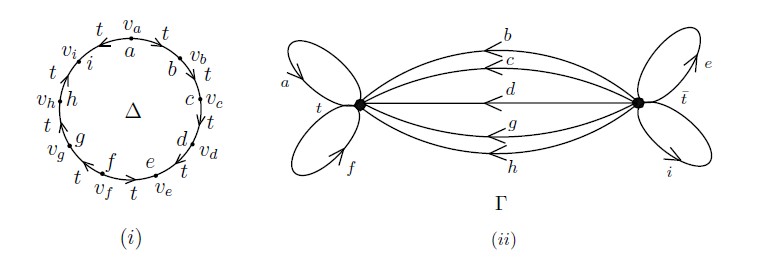}
        \caption{Region $\Delta$ of $K$ and star graph $\Gamma$ of $\mathcal{P}$ }
    \label{1}
\end{figure}
Looking at closed paths in star graph $\Gamma$, using the fact that $G$ is torsion free  and working modulo cyclic permutation and inversion, the possible labels of vertices of degree 2 for a region $\Delta$ of $K$ are 
$$ S=\lbrace af,af^{-1},ei,ei^{-1},hb^{-1},hc^{-1}, hd^{-1},hg^{-1},gb^{-1},gc^{-1},gd^{-1},db^{-1},dc^{-1},cb^{-1} \rbrace. $$
We can work modulo equivalence, that is, modulo $t \leftrightarrow t^{-1}$, cyclic permutation, inversion, and 
$$a \leftrightarrow e^{-1}, b \leftrightarrow d^{-1}c \leftrightarrow c^{-1}f \leftrightarrow i^{-1}, g \leftrightarrow h^{-1}.$$
We will proceed according to the number $N$ of labels in $S$ that are admissible \cite{BP} and classify the cases correspondingly \cite{BE}. The following remark substantially reduces the number of cases to be considered.

\begin{remark}
\normalfont
The following observations hold trivially.
\begin{enumerate}
\item If all the admissible cycles have length greater than $2$ in the region $\Delta$ then
$c(\Delta)\leq c(3, 3, 3, 3, 3, 3, 3, 3, 3) = -\pi$.
\item If $af$, $af^{-1}$ are admissible, then $f^{2}=1$, a contradiction.
\item If $ei$, $ei^{-1}$ are admissible, then $i^{2}=1$, a contradiction.
\item At most two of $af$, $af^{-1}$, $ei$, $ei^{-1}$ are admissible.
\item If any two of $hb^{-1}$, $hc^{-1}$, $cb^{-1}$ are admissible then, so is the third.
\item If any two of $hb^{-1}$, $hd^{-1}$, $db^{-1}$ are admissible then, so is the third. 
\item If any two of $hb^{-1}$, $hg^{-1}$, $gb^{-1}$ are admissible then, so is the third.  
\item If any two of $hc^{-1}$, $hd^{-1}$, $dc^{-1}$ are admissible then, so is the third. 
\item If any two of $hc^{-1}$, $hg^{-1}$, $gc^{-1}$ are admissible then, so is the third. 
\item If any two of $hd^{-1}$, $hg^{-1}$, $gd^{-1}$ are admissible then, so is the third. 
\item If any two of $gb^{-1}$, $gc^{-1}$, $cb^{-1}$ are admissible then, so is the third.
\item If any two of $gb^{-1}$, $gd^{-1}$, $db^{-1}$ are admissible then, so is the third.
\item If any two of $gc^{-1}$, $gd^{-1}$, $dc^{-1}$ are admissible then, so is the third. 
\item If any two of $dc^{-1}$, $db^{-1}$, $cb^{-1}$ are admissible then, so is the third.
\end{enumerate} 
\end{remark}

\section{Main Results:}
In what follows, the vertex labels correspond to the closed paths in the star graph $\Gamma$. The proof of the following Lemma \ref{wt} is an immediate application of the weight test \cite{BP}.
\begin{lemma}\label{wt} The presentation $\mathcal{P}=\langle G,t~|~s(t)\rangle$ aspherical if any one of the following holds:
\begin{enumerate}
\item $a=f^{-1}$;
\item $a=f^{-1}$ and $ R \in \lbrace ei,ei^{-1},hb^{-1},hc^{-1}, hd^{-1},hg^{-1},gb^{-1},gc^{-1},gd^{-1},db^{-1},dc^{-1},cb^{-1} \rbrace $; 
\item $a=f^{-1}, e=i^{-1}$ and $R \in \lbrace hb^{-1},hc^{-1}
 hd^{-1},hg^{-1},db^{-1},dc^{-1} \rbrace $;
\item $a=f^{-1}, e=i$ and $R \in \lbrace hb^{-1},hc^{-1}, hd^{-1},hg^{-1}gb^{-1},gc^{-1},gd^{-1},db^{-1},dc^{-1},cb^{-1} \rbrace $;
\item $a=f^{-1}, h=b$ and $R \in \lbrace gc^{-1},gd^{-1},dc^{-1} \rbrace $;
\item $a=f^{-1}, h=c$ and $R \in \lbrace gb^{-1},gd^{-1},db^{-1} \rbrace $;
\item $a=f^{-1}, h=d$ and $R \in \lbrace gb^{-1},gc^{-1},cb^{-1} \rbrace $;
\item $a=f^{-1}, h=g$ and $R \in \lbrace db^{-1},dc^{-1},cb^{-1}$;
\item $a=f^{-1}, g=b, d=c$;
\item $a=f^{-1}, g=c, d=b$;
\item $a=f^{-1}, g=d, c=b$;
\item $a=f^{-1}, e=i^{-1}, h=b$ and $R \in \lbrace gc^{-1},gd^{-1},dc^{-1} \rbrace $;
\item $a=f^{-1}, e=i^{-1}, h=c$ and $R \in \lbrace gb^{-1},db^{-1} \rbrace $;
\item $a=f^{-1}, e=i^{-1}, h=d$ and $R \in \lbrace gb^{-1},cb^{-1} \rbrace $;
\item $a=f^{-1}, e=i^{-1}, h=g$ and $R \in \lbrace db^{-1},dc^{-1} \rbrace $;
\item $a=f^{-1}, e=i, h=b$ and $R \in \lbrace gc^{-1},gd^{-1},dc^{-1} \rbrace $;
\item $a=f^{-1}, e=i, g=d$ and $R \in \lbrace hc^{-1},cb^{-1} \rbrace $;
\item $a=f^{-1}, e=i, h=c$ and $R \in \lbrace gb^{-1},db^{-1} \rbrace $;
\item $a=f^{-1}, e=i, g=c$ and $R \in \lbrace hd^{-1},db^{-1} \rbrace $;
\item $a=f^{-1}, e=i, h=d$ and $R \in \lbrace gb^{-1},cb^{-1} \rbrace $;
\item $a=f^{-1}, e=i, g=b, d=c$;
\item $a=f^{-1}, e=i, h=g$ and $R \in \lbrace db^{-1},dc^{-1},cb^{-1} \rbrace $;
\item $a=f^{-1}, h=b, h=c, c=b$;
\item $a=f^{-1}, h=b, h=d, d=b$;
\item $a=f^{-1}, h=b, h=g, g=b$;
\item $a=f^{-1}, h=c, h=d, d=c$;
\item $a=f^{-1}, h=c, h=g, g=c$;
\item $a=f^{-1}, d=c, d=b, c=b$;
\item $a=f^{-1}, g=d, g=c, d=c$;
\item $a=f^{-1}, g=d, g=b, d=b$;
\item $a=f^{-1}, g=d, h=g, h=d$;
\item $a=f^{-1}, g=c, g=b, c=b$;
\item $a=f^{-1}, e=i^{-1}, h=b, h=c, c=b$;
\item $a=f^{-1}, e=i^{-1}, h=b, h=d, d=b$;
\item $a=f^{-1}, e=i^{-1}, h=b, h=g, g=b$;
\item $a=f^{-1}, e=i^{-1}, h=c, h=d, d=c$;
\item $a=f^{-1}, e=i^{-1}, h=c, h=g, g=c$;
\item $a=f^{-1}, e=i^{-1}, d=c, d=b, c=b$;
\item $a=f^{-1}, e=i, h=b, h=c, c=b$;
\item $a=f^{-1}, e=i, h=b, h=d, d=b$;
\item $a=f^{-1}, e=i, h=b, h=g, g=b$;
\item $a=f^{-1}, e=i, g=d, g=c, d=c$;
\item $a=f^{-1}, e=i, g=d, g=b, d=b$;
\item $a=f^{-1}, e=i, g=d, h=g, h=d$;
\item $a=f^{-1}, e=i, h=c, h=d, d=c$;
\item $a=f^{-1}, e=i, h=c, h=g, g=c$;
\item $a=f^{-1}, e=i, g=c, g=b, c=b$;
\item $a=f^{-1}, e=i, d=c, d=b, c=b$;
\item $a=f^{-1}, h=b, h=c, c=b, g=d$;
\item $a=f^{-1}, h=b, h=d, d=b, g=c$;
\item $a=f^{-1}, h=b, h=g, g=b, d=c$;
\item $a=f^{-1}, h=c, h=d, d=c, g=b$;
\item $a=f^{-1}, h=c, h=g, g=c, d=b$;
\item $a=f^{-1}, d=c, d=b, c=b, h=g$;
\item $a=f^{-1}, g=d, g=c, d=c, h=b$;
\item $a=f^{-1}, g=d, g=b, d=b, h=c$;
\item $a=f^{-1}, g=d, h=g, h=d, c=b$;
\item $a=f^{-1}, g=c, g=b, c=b, h=d$;
\item $a=f^{-1}, e=i^{-1}, h=b, h=c, c=b, g=d$;
\item $a=f^{-1}, e=i^{-1}, h=b, h=d, d=b, g=c$;
\item $a=f^{-1}, e=i^{-1}, h=b, h=g, g=b, d=c$;
\item $a=f^{-1}, e=i^{-1}, h=c, h=d, d=c, g=b$;
\item $a=f^{-1}, e=i^{-1}, h=c, h=g, g=c, d=b$;
\item $a=f^{-1}, e=i^{-1}, d=c, d=b, c=b, h=g$;
\item $a=f^{-1}, e=i, h=b, h=c, c=b, g=d$;
\item $a=f^{-1}, e=i, h=b, h=d, d=b, g=c$;
\item $a=f^{-1}, e=i, h=b, h=g, g=b, d=c$;
\item $a=f^{-1}, e=i, h=c, h=d, d=c, g=b$;
\item $a=f^{-1}, e=i, h=c, h=g, g=c, d=b$;
\item $a=f^{-1}, e=i, d=c, d=b, c=b, h=g$;
\item $a=f^{-1}, e=i, g=d, g=c, d=c, h=b$;
\item $a=f^{-1}, e=i, g=d, g=b, d=b, h=c$;
\item $a=f^{-1}, e=i, g=d, h=g, h=d, c=b$;
\item $a=f^{-1}, e=i, g=c, g=b, c=b, h=d$;
\item $a=f^{-1}, h=b, h=c, c=b, h=d, d=b, d=c$;
\item $a=f^{-1}, h=b, h=d, d=b, h=g, g=b, g=d$;
\item $a=f^{-1}, h=b, h=c, c=b, h=g, g=b, g=c$;
\item $a=f^{-1}, g=d, g=c, d=c, g=b, d=b, c=b$;
\item $a=f^{-1}, g=d, g=c, d=c, h=g, h=d, h=c$;
\item $a=f^{-1}, e=i^{-1}, h=b, h=c, c=b, h=d, d=b, d=c$;
\item $a=f^{-1}, e=i^{-1}, h=b, h=d, d=b, h=g, g=b, g=d$;
\item $a=f^{-1}, e=i^{-1}, h=b, h=c, c=b, h=g, g=b, g=c$;
\item $a=f^{-1}, e=i, h=b, h=c, c=b, h=d, d=b, d=c$;
\item $a=f^{-1}, e=i, h=b, h=d, d=b, h=g, g=b, g=d$;
\item $a=f^{-1}, e=i, h=b, h=c, c=b, h=g, g=b, g=c$;
\item $a=f^{-1}, e=i, g=d, g=c, d=c, g=b, d=b, c=b$;
\item $a=f^{-1}, e=i, g=d, g=c, d=c, h=g, h=d, h=c$;
\item $a=f^{-1}, g=b, g=c, c=b, g=d, d=b, d=c, h=b, h=c, h=d, h=g$;
\item $a=f^{-1}, e=i^{-1}, g=b, g=c, c=b, g=d, d=b, d=c, h=b, h=c, h=d, h=g$;
\item $a=f^{-1}, e=i, g=b, g=c, c=b, g=d, d=b, d=c, h=b, h=c, h=d, h=g$.
\end{enumerate}
\end{lemma}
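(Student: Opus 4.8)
The plan is to establish each item by exhibiting an explicit aspherical weight function $\theta$ on the star graph $\Gamma$ of Figure~\ref{1}(ii) and then quoting the weight test. As displayed there, $\Gamma$ has the two vertices $t,t^{-1}$ and nine edges, one per corner of $s(t)$: the labels $b,c,d,g,h$ give five edges joining the two vertices, while $a,f$ are loops at one vertex and $e,i$ are loops at the other. A weight function assigns a value $\theta(E)\ge 0$ to each edge, and it is aspherical provided (i) every admissible cycle of $\Gamma$ has total $\theta$-weight at least $2$, and (ii) the relator satisfies $\sum_{\text{nine corners}}\theta\le 7$, equivalently $c(\Delta)=-7\pi+\pi\sum\theta\le 0$. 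The decisive translation is that an element of $S$ is admissible precisely when the corresponding coincidence holds: $af$ is admissible iff $a=f^{-1}$, $ei^{-1}$ iff $e=i$, $dc^{-1}$ iff $d=c$, and so on (this is also what underlies the Remark, since e.g. $h=b$ and $h=c$ force $c=b$). Thus each item fixes exactly which edges of $\Gamma$ are pinned to an admissible $2$-cycle, and the whole list is organised by the number $N$ of admissible labels.

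For a given item I would start from the weight function that puts weight $1$ on every edge lying on an admissible $2$-cycle and pushes every other edge down toward $0$, lowering the free weights only as far as the admissible cycles of length $\ge 3$ allow. Two reductions keep this finite. First, the Remark discards redundant items. Second, and more importantly for the budget, after the normalisation $b=1$ many coincidences render a \emph{pair} of corner labels trivial (for instance $cb^{-1}$ admissible means $c=b=1$); the degree-$2$ vertex sitting between two parallel, trivially labelled edges is a collapsible configuration that does not occur in a reduced picture, so that cycle is not admissible and imposes no constraint. With these reductions, condition (i) holds for the surviving $2$-cycles by construction; it holds for the longer cycles because the stated coincidences leave almost every closed path of $\Gamma$ with non-trivial label (hence inadmissible); and condition (ii) reduces to the arithmetic that the pinned corners, together with the lowered free corners, do not overspend the budget $7$. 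In the low-$N$ items only a couple of corners are pinned, so the bound is met with room to spare, and the verification is indeed immediate.

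I expect the only genuine obstacle to be the final block of items, where a maximal number of coincidences hold simultaneously — in particular where $a=f^{-1}$ is combined with $e=i$ or $e=i^{-1}$ and with several \emph{nontrivial} equalities among $c,d,g,h$. There the loops $a,f$ and $e,i$ already pin four corners to total weight $4$, each nontrivial coincidence among $c,d,g,h$ forces further pairwise sums up to $2$, and the accumulated forced weight presses hard against the threshold $7$. The delicate part of the proof is therefore the exact assignment in these tight cases: one must drive every free corner (and, via collapsibility, every trivially labelled corner) to $0$, balance the weights on the coinciding edges so that all the pairwise $2$-cycles still reach $2$, and then confirm that the few genuinely admissible cycles of length $\ge 3$ are not pushed below $2$, so that $\sum_{\text{corners}}\theta\le 7$ survives (typically with equality). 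Showing that such a $\theta$ can be produced for each of these maximally degenerate items, by isolating exactly which of the many formally trivial cycles persist as admissible cycles in a reduced picture, is where essentially all the work of the lemma resides.
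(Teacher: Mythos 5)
Your proposal does not follow the paper's proof, and as written it contains a fatal gap. The paper never applies the weight test to the star graph of Figure~\ref{1}(ii). Its proof exploits the coincidence $a=f^{-1}$ --- which occurs in \emph{every} one of the ninety items, and not by accident --- to rewrite the presentation: putting $x=t^{-1}a^{-1}t$ replaces the single length-nine relator by the pair $v_{1}=x^{-1}tctdtexgthti$, $v_{2}=x^{-1}t^{-1}a^{-1}t$ over the two generators $t,x$, and the weight test is applied to the star graph of \emph{this} two-relator presentation (Figure~\ref{wht}), with $\theta=0$ on $\mu_{4},\mu_{6},\omega_{1},\omega_{2}$ and $\theta=1$ elsewhere. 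In that graph every cycle of weight less than $2$ has label a nonzero power of $a$ or of $i$, which torsion-freeness rules out; that is how the admissible-cycle condition is verified. Your plan omits this substitution entirely, and the substitution is the one idea the lemma actually turns on.

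The omission is fatal, not cosmetic, because of your claim that longer cycles are harmless since ``the stated coincidences leave almost every closed path of $\Gamma$ with non-trivial label''. That is false: the hypotheses of an item fix only the fourteen labels in $S$, i.e.\ the $2$-cycles. A longer cycle, say the $3$-cycle through edges $c$, $a$, $d$ with label $cad^{-1}$, is admissible in any group of the item in which $a=c^{-1}d$ holds, and nothing in the item's hypotheses excludes such groups; so a weight function valid for the whole item must cover all such cycles (and even a $G$-by-$G$ choice of $\theta$ would have to, for the particular extra relations holding in that $G$ --- your proposal offers no mechanism for this). This demand is unsatisfiable with non-negative weights already for item 2 with $R=hb^{-1}$, i.e.\ $a=f^{-1}$, $h=b$: one needs $\theta(a)+\theta(f)\ge 2$ and $\theta(b)+\theta(h)\ge 2$; for each pair $x,y\in\{c,d,g\}$ and each loop $z\in\{a,f,e,i\}$ the cycle through $x,z,y$ is admissible in some group of the item, forcing $\theta(c)+\theta(d)+\theta(g)\ge\tfrac{3}{2}(2-L)$ where $L=\min(\theta(a),\theta(f),\theta(e),\theta(i))$; and the cycles labelled $e^{2}i$ and $ei^{2}$ (admissible when $i=e^{-2}$, resp.\ $e=i^{-2}$, both allowed by the item) force $\theta(e)+\theta(i)\ge\tfrac{4}{3}$. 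Summing, the nine weights total at least $7-\tfrac{3}{2}L+\theta(e)+\theta(i)$, so the relator condition $\sum\theta\le 7$ gives $\theta(e)+\theta(i)\le\tfrac{3}{2}L\le\tfrac{3}{4}\bigl(\theta(e)+\theta(i)\bigr)$, hence $\theta(e)+\theta(i)\le 0$, a contradiction. So precisely the items you expect to go through ``with room to spare'' are unprovable by your scheme, while the genuinely many-coincidence items are no harder for the paper than the first one. Finally, your ``collapsibility'' exemption for trivially labelled $2$-cycles has no basis: the weight test grants no such exemption, and degree-two vertices with these labels do occur in reduced pictures --- they are exactly the admissible labels that drive the paper's case division.
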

\begin{figure}[H]
\centering
        \includegraphics[width=5cm]{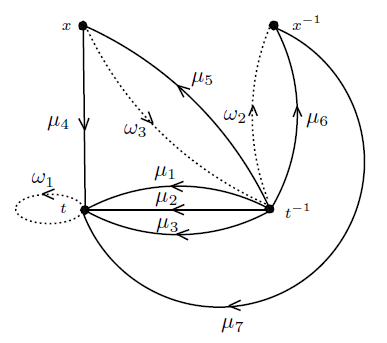}
        \caption{Star graph $\Gamma$}
    \label{wht}
\end{figure}
\begin{proof} We prove the Lemma for the case $a=f^{-1}$. The proof of the remaining cases follows similarly. The relator is $s(t)=atbtctdtet^{-1}a^{-1}tgthtit^{-1}$. We put $x=t^{-1}a^{-1}t$ to obtain $v_{1}=x^{-1}tctdtexgthti$ and  $v_{2}=x^{-1}t^{-1}a^{-1}t$.

The presentation $\mathcal{P}$ has star graph $\Gamma$ which is shown in Figure \ref{wht} in which $\mu_{1}= c$, $\mu_{2}=d$, $\mu_{3}=h$, $\mu_{4}=1$, $\mu_{5}=e$, $\mu_{6}=i$, $\mu_{7}=g$; and $\omega_{1}=a^{-1}$, $\omega_{2}=1$, $\omega_{3}=1$. We define a weight function $\theta$ such that  $\theta(\mu_{4})= \theta(\mu_{6})= \theta(\omega_{1})= \theta(\omega_{2})= 0$ and    
$\theta(\mu_{1})= \theta(\mu_{2})= \theta(\mu_{3})=\theta(\mu_{5}) = \theta(\mu_{7})= \theta(\omega_{3})=1$. Then $\Sigma(1-\theta(\mu_{i}))=\Sigma(1-\theta(\omega_{j}))=2$ indicates that the first condition of the weight test is fulfilled. Moreover, every cycle in $\Gamma$ of weight smaller than 2 has label $a^{m}$ or $i^{m}$, where $m \in \mathbb{Z}\setminus \{0\}$  and $a,i\in G\setminus \{1\}$, which implies $a$ and $i$ are torsion elements in $G$, a contradiction, so the second condition of weight test is fulfilled. Furthermore, since $\theta$ assigns non-negative weights to each edge, the third condition of weight test is obviously fulfilled.
\end{proof}
 From now onward, the label and the degree of a vertex $v$ of the region $\Delta$ will be denoted by $l_{\Delta}(v)$ and $d_{\Delta}(v)$  respectively. Furthermore, $l_{\Delta}\in\{ww_1,\dots,ww_k\}$ will be indicated by $l_{\Delta}(v)= \{ww_1,\dots,ww_k\}$.
 The forthcoming Lemmas and Corollary are proved by using curvature distribution \cite{Ed}.

\begin{lemma}\label{lem2} The presentation $\mathcal{P}=\langle G,t~|~s(t)\rangle$ is aspherical if 
$a=f,e=i$.
\end{lemma}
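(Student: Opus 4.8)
The plan is to establish asphericity by curvature distribution, following the framework of Section 2. Suppose for contradiction that $K$ is a reduced spherical diagram over $\mathcal{P}$. Since $s(t)$ has nine $t$-syllables, every region is a nine-gon, so if the corner vertices of a region $\Delta$ have degrees $d_1,\dots,d_9$ then
$$c(\Delta)=(2-9)\pi+2\pi\sum_{k=1}^{9}\frac{1}{d_k},$$
which agrees with $c(3,\dots,3)=-\pi$ from the Remark. By the Gauss--Bonnet formula $\sum_{\Delta}c(\Delta)=4\pi>0$. I would aim to contradict this by showing that in fact $c(\Delta)<0$ for \emph{every} region, so that no positive curvature is ever available to distribute.

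First I would pin down where degree-two vertices can sit. With $a=f$ and $e=i$ the cycles $af^{-1}$ and $ei^{-1}$ are admissible, while $af$ and $ei$ are not (Remark (2)--(4), since $f^{2}\neq 1\neq i^{2}$); in the case under consideration these are the only admissible members of $S$. Reading $s(t)=atbtctdtet^{-1}ftgthtit^{-1}$, a corner can carry a degree-two vertex only if the element of $S$ meeting it is admissible, so the only candidate corners are those labelled $a,e,f,i$, i.e. positions $1,5,6,9$, while the corners $b,c,d,g,h$ have degree at least $3$. If $j$ of the nine corners have degree $2$, then bounding the remaining degrees below by $3$ gives
$$c(\Delta)\le -7\pi+2\pi\Big(\tfrac{j}{2}+\tfrac{9-j}{3}\Big)=-\pi+\tfrac{\pi j}{3},$$
so a region of positive curvature would need $j\ge 4$, i.e. all four of the candidate corners must be degree-two.

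The crux is to rule this out. The four candidate corners split into the two consecutive pairs $\{e,f\}$ (positions $5,6$) and $\{i,a\}$ (positions $9,1$), each pair being joined by one of the two $t^{-1}$-edges of the relator (edges $5$ and $9$). I would prove that no such $t^{-1}$-edge can join two degree-two vertices in a reduced diagram. Consider the edge between the corners $e$ and $f$ and let $R$ be the region on its other side. Because both endpoints are degree-two, only $\Delta$ and $R$ meet there; tracking the reading of each edge from both sides, the cancellation $af^{-1}$ at the $f$-corner forces $R$ to present its $a$-corner (position $1$) there, which identifies the shared edge with the edge of $R$ following position $1$, whereas the cancellation $ei^{-1}$ at the $e$-corner forces $R$ to present its $i$-corner (position $9$) there, identifying the \emph{same} shared edge with the edge of $R$ preceding position $9$. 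These are two distinct edges of the single disc $R$, a contradiction. The same argument applies verbatim to the pair $\{i,a\}$. Hence at most one corner from each pair is degree-two, so $j\le 2$ and $c(\Delta)\le -\pi+\tfrac{2\pi}{3}=-\tfrac{\pi}{3}<0$ for every region $\Delta$.

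Summing, $\sum_{\Delta}c(\Delta)\le -\tfrac{\pi}{3}\,|\{\text{regions}\}|<0$, contradicting the value $4\pi$; therefore no reduced spherical diagram exists, $\mathcal{P}$ is aspherical, and $s(t)=1$ is solvable over $G$. I expect the orientation-and-incidence bookkeeping of the crux step to be the main obstacle: one must fix, with correct signs, how each edge is read from both sides and verify that the two forced corners of $R$ genuinely demand incompatible edge identifications rather than merely appearing to. A secondary point of care is that, should additional coincidences among $c,d,g,h$ render further corners admissible, regions with $j\ge 3$ could reappear; such configurations are constrained by the triangle rules of the Remark (items (5)--(14)) and would be handled by the same edge-incidence analysis together with an explicit association $\Delta\mapsto\widehat{\Delta}$, but they lie outside the present case.
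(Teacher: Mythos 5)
Your reduction to the case $j=4$ is sound: with only $af^{-1}$ and $ei^{-1}$ admissible, degree-two vertices can occur only at the corners $a,e,f,i$, and positive curvature forces all four to have degree two. But the crux step --- ruling that configuration out --- is false, and it fails exactly at the orientation bookkeeping you yourself flagged as the main obstacle. Track the signs. At $v_f$ the admissible cycle is $af^{-1}$ (not $af$, since $a=f$ and $a\neq f^{-1}$), so the region $R$ across the $t^{-1}$-edge contributes the corner $a^{-1}$, i.e.\ $R$ is glued with \emph{reversed} orientation; likewise at $v_e$ it contributes $i^{-1}$. Now read $R$ with that reversed orientation: cyclically, $s(t)^{-1}=t\,i^{-1}\,t^{-1}h^{-1}t^{-1}g^{-1}t^{-1}f^{-1}\,t\,e^{-1}\,t^{-1}d^{-1}t^{-1}c^{-1}t^{-1}b^{-1}t^{-1}\,a^{-1}$, in which $a^{-1}$ and $i^{-1}$ are \emph{adjacent} corners, separated by the single edge coming from the $t^{-1}$ between $i$ and $a$ in $s(t)$. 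Because the orientation is reversed, the shared edge is the one \emph{preceding} position $1$ and \emph{following} position $9$ in $R$'s forward reading --- the ninth $t$-edge in both cases --- not, as you wrote, the edge following position $1$ and the edge preceding position $9$. So your two forced incidences point at the same edge of $R$: $\Delta$'s fifth $t$-edge is glued to $R$'s ninth $t$-edge, with compatible arrows ($\Delta$ reads it as $t^{-1}$, $R$ reads it as $t$). There is no contradiction. Nor does reducedness exclude this pair: the two regions meet along different positions of the relator (fifth versus ninth $t$-edge), so they are not a cancelling pair. This interaction is precisely why the hypotheses $a=f$, $e=i$ are delicate: the relator's two $t^{-1}$-syllables sit in the contexts $et^{-1}f$ and $it^{-1}a$, which match under inversion.

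Consequently, positively curved regions genuinely exist --- all four of $v_a,v_e,v_f,v_i$ of degree two gives $c(\Delta)\le c(2,2,2,2,3,3,3,3,3)=\pi/3$ --- and your plan of showing $c(\Delta)<0$ for every region cannot work. The paper's proof starts from exactly the configuration you tried to exclude (its Figure \ref{3}(i)), runs a case analysis on the label of the degree-three vertex $v_g$, namely $l_{\Delta}(v_g)=g\{a,a^{-1},f,f^{-1}\}b^{-1}$, which in turn forces $l_{\Delta}(v_h)$, and in each of the four cases transfers $c(\Delta)$ to a specific neighbouring region $\widehat{\Delta}$, verifying $c(\Delta)+c(\widehat{\Delta})\le 0$ from the forced degree-two vertices of $\widehat{\Delta}$. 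That curvature-distribution step is the actual content of the lemma and is entirely absent from your argument; without it (or some genuinely new idea), the proof does not go through.
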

\begin{proof}
Here $l_{\Delta}(v_{a})=af^{-1}$, 
 $l_{\Delta}(v_{e})=ei^{-1}$, $l_{\Delta}(v_{f})=fa^{-1}$, $l_{\Delta}(v_{i})=ie^{-1}$, as given in Figure \ref{3}(i).
 \begin{figure}[H]
\centering
        \includegraphics[width=9cm]{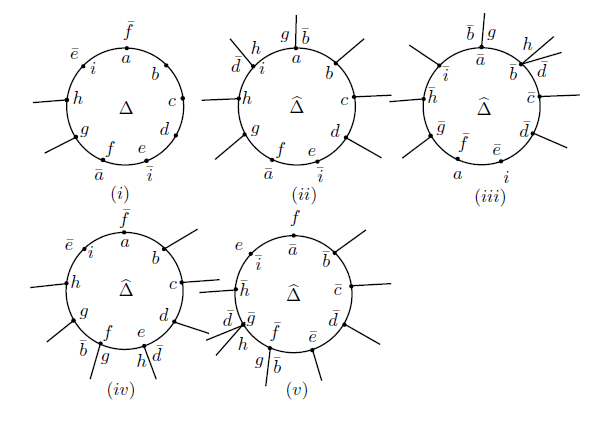}
        \caption{Regions $\Delta$ and $\widehat{\Delta}$}
    \label{3}
\end{figure} 
 
To obtain positive curvature the degree of the remaining vertices must be at least three. Suppose $d_{\Delta}(v_{g})=3$ and $d_{\Delta}(v_{h})=3$ or $d_{\Delta}(v_{h}) > 3$. Thus $l_{\Delta}(v_{g})=g\{a,a^{-1},f,f^{-1}\}b^{-1}$ which implies $l_{\Delta}(v_{h})=hd^{-1}\{i,b^{-1}, e, g^{-1}\}$. There are the following four cases to examine:
\begin{enumerate}
\item $l_{\Delta}(v_{g})=gab^{-1}$;
\item $l_{\Delta}(v_{g})=ga^{-1}b^{-1}$;
\item $l_{\Delta}(v_{g})=gfb^{-1}$;
\item $l_{\Delta}(v_{g})=gf^{-1}b^{-1}$.
\end{enumerate}
   
\begin{enumerate}
\item Since $l_{\Delta}(v_{g})=gab^{-1}$ so $l_{\Delta}(v_{h})=hd^{-1}i$. Add $c(\Delta)\leq \dfrac{\pi}{3}$ to $c(\widehat{\Delta})$ is given by Figure \ref{3}(ii). Notice that $d_{\widehat{\Delta}}(v_{e})=d_{\widehat{\Delta}}(v_{f})=2$ and all other vertices have degree at least 3. Therefore $c(\widehat{\Delta}) \leq c(2,2,3,3,3,3,3,3,3) = \dfrac{-\pi}{3}$.
\item Since $l_{\Delta}(v_{g})=ga^{-1}b^{-1}$ so $l_{\Delta}(v_{h})=hd^{-1}b^{-1}w$ where $w\in \{c,d,g,h\}$ which implies $l_{\Delta}(v_{h})> 3$. Add $c(\Delta)\leq \dfrac{\pi}{6}$ to $c(\widehat{\Delta})$ is given by Figure \ref{3}(iii). Observe that $d_{\widehat{\Delta}}(v_{f^{-1}})=d_{\widehat{\Delta}}(v_{e^{-1}})=2$. Notice that either $d_{\widehat{\Delta}}(v_{a^{-1}})=3$ or $d_{\widehat{\Delta}}(v_{i^{-1}})=2$, since $d_{\widehat{\Delta}}(v_{a^{-1}})=3$ already present so $d_{\widehat{\Delta}}(v_{i^{-1}})> 2$ otherwise contradiction occur and all other vertices have degree at least 3. Therefore $c(\widehat{\Delta}) \leq c(2,2,3,3,3,3,3,3,4) = \dfrac{-\pi}{2}$.
\item Since $l_{\Delta}(v_{g})=gfb^{-1}$ so $l_{\Delta}(v_{h})=hd^{-1}e$. Add $c(\Delta)\leq \dfrac{\pi}{3}$ to $c(\widehat{\Delta})$ is given by Figure \ref{3}(iv). Notice that  $d_{\widehat{\Delta}}(v_{i})=d_{\Delta}(v_{a})=2$ and all other vertices have degree at least 3. Therefore $c(\widehat{\Delta}) \leq c(2,2,3,3,3,3,3,3,3) = \dfrac{-\pi}{3}$.
\item Since $l_{\Delta}(v_{g})=gf^{-1}b^{-1}$ so $l_{\Delta}(v_{h})=hd^{-1}g^{-1}w$ where $w\in \{b,c,d,h\}$ which implies $l_{\Delta}(v_{h})> 3$. Add $c(\Delta)\leq \dfrac{\pi}{6}$ to $c(\widehat{\Delta})$ is given by Figure \ref{3}(v). Remark that $d_{\widehat{\Delta}}(v_{a^{-1}})=d_{\widehat{\Delta}}(v_{i^{-1}})=2$. Notice that either $d_{\widehat{\Delta}}(v_{f^{-1}})=3$ or $d_{\widehat{\Delta}}(v_{e^{-1}})=2$ since $d_{\widehat{\Delta}}(v_{f^{-1}})=3$ already present so $d_{\widehat{\Delta}}(v_{e^{-1}})> 2$ otherwise contradiction occur and all other vertices have degree at least 3. Therefore $c(\widehat{\Delta}) \leq c(2,2,3,3,3,3,3,3,4)= \dfrac{-\pi}{2}$.
\end{enumerate}
\end{proof}

\begin{lemma}\label{lem3} The presentation $\mathcal{P}=\langle G,t~|~s(t)\rangle$ aspherical if any one of the following holds:
\begin{enumerate}
\item $a=f$ and $R \in \{hb^{-1},hg^{-1},gd^{-1},gc^{-1},db^{-1},dc^{-1}
,cb^{-1}\}$;
\item $h=b$ and $R \in \{gd^{-1},dc^{-1}\}$;
\item $h=d$ and $R \in \{gb^{-1},cb^{-1}\}$;
\item $h=c,d=b$; 
\item $h=g,d=b$;
\item $a=f,g=c,d=b$;
\item $a=f,g=d,c=b$;
\item $h=b,h=d,d=b$;
\item $a=f,g=d,g=c,d=c$;
\item $h=b,h=c,c=b,h=d,d=b,d=c$;
\item $h=b,h=d,d=b,h=g,g=b,g=d$.
\end{enumerate}
\end{lemma}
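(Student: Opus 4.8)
The plan is to establish asphericity by curvature distribution, in the spirit of Lemma~\ref{lem2} and \cite{Ed,EH}. Suppose for contradiction that $K$ is a reduced spherical diagram over $\mathcal{P}$. Every region of $K$ is a $9$-gon, and assigning to a corner at a vertex of degree $d$ the angle $2\pi/d$ gives
$$c(d_1,\dots,d_9)=2\pi-\sum_{k=1}^{9}\Bigl(\pi-\frac{2\pi}{d_k}\Bigr)=-7\pi+\sum_{k=1}^{9}\frac{2\pi}{d_k};$$
in particular a $9$-gon with exactly $j$ vertices of degree $2$ and the remaining $9-j$ of degree $\ge3$ satisfies $c(\Delta)\le-\pi+\tfrac{j\pi}{3}$, with equality only when all the other vertices have degree exactly $3$. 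Since $\sum_\Delta c(\Delta)=4\pi$ by Gauss--Bonnet, some region has positive curvature. The aim is to assign to each such $\Delta$ a uniquely determined neighbour $\widehat{\Delta}$ with $c(\Delta)+c(\widehat{\Delta})\le0$; injectivity of $\Delta\mapsto\widehat{\Delta}$ then gives $\sum_\Delta c(\Delta)\le0$, contradicting $4\pi$ and forcing asphericity.

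The first step in each case is to record the corners of $\Delta$ that may carry a degree-$2$ vertex: a corner is degree-$2$ capable exactly when one of the labels of $S$ through it is trivial in $G$. Thus $a=f$ trivialises $l_\Delta(v_a)=af^{-1}$ and $l_\Delta(v_f)=fa^{-1}$ and so makes both $v_a,v_f$ capable, while $x=y$ with $x,y\in\{b,c,d,g,h\}$ makes $v_x,v_y$ capable through $xy^{-1}\in S$. Writing $j$ for the number of capable corners, the bound above reads $c(\Delta)\le-\pi+\tfrac{j\pi}{3}$. This already settles case~(8): there $b=d=h$ (and $b=1$) leaves only $v_b,v_d,v_h$ capable, so $j=3$ and every region has $c(\Delta)\le c(2,2,2,3,3,3,3,3,3)=0$; hence $K$ has no region of positive curvature, $\sum_\Delta c(\Delta)\le0$, a contradiction. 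In the other cases $j=4$ (cases (1)--(5),(10),(11)), $j=5$ (case (9)) or $j=6$ (cases (6),(7)), so the genuine compensation argument is needed.

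For $\Delta$ with $c(\Delta)>0$ I would list the finitely many degree patterns attaining positive curvature, read the label word around $\partial\Delta$, and cross into $\widehat{\Delta}$ over a prescribed edge. The forced pairing at a degree-$2$ vertex of $\Delta$ fixes the corresponding label of $\widehat{\Delta}$, and reading on around $\partial\widehat{\Delta}$ one finds that several of its corners are pinned to labels that are \emph{not} admissible, that is, forced to degree $\ge3$, so $\widehat{\Delta}$ carries few degree-$2$ vertices and $c(\widehat{\Delta})$ is very negative. In the $j=4$ cases this yields the pair $c(\Delta)\le\pi/3$, $c(\widehat{\Delta})\le-\pi/3$ (and $c(\Delta)\le\pi/6$, $c(\widehat{\Delta})\le-\pi/2$ when a vertex is further forced to degree $\ge4$), exactly as in the four sub-cases of Lemma~\ref{lem2}. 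Throughout I would exploit the equivalences $a\leftrightarrow e^{-1}$, $b\leftrightarrow d^{-1}$, $c\leftrightarrow c^{-1}$, $f\leftrightarrow i^{-1}$, $g\leftrightarrow h^{-1}$ (which interchange $v_a\leftrightarrow v_e$, $v_b\leftrightarrow v_d$, $v_f\leftrightarrow v_i$, $v_g\leftrightarrow v_h$ and fix $v_c$) and the triangle observations of the Remark to cut the list down to a few representatives and recover the rest by symmetry.

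Two points carry the real weight. First, in the cases with many capable corners---$j=5$ in case~(9) and $j=6$ in cases~(6),(7)---the bound permits $c(\Delta)$ as large as $2\pi/3$ or even $\pi$, so the associated neighbour must be forced down to $-2\pi/3$ or $-\pi$. Achieving this demands that the label propagation pin down not merely degree $\ge3$ but several vertices of degree $\ge4$ in $\widehat{\Delta}$, and, where the data are insufficient, that the reducedness of $K$ together with the structure of $\Gamma$ be used to exclude the most extreme degree patterns of $\Delta$ outright. This propagation-and-exclusion analysis is the principal obstacle. Second, to reach a value such as $c(\widehat{\Delta})\le-\pi/2$ one must force a designated vertex of $\widehat{\Delta}$ to have degree strictly greater than $3$; as in Lemma~\ref{lem2}, degree $3$ there would make two admissible labels coincide and hence, since $G$ is torsion free with $a,e,f,i\ne1$, yield a nontrivial power $a^m$ or $i^m$ equal to $1$, or a relation already excluded by the hypothesis---a contradiction. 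Fixing once and for all a selection rule for the compensating edge (say the edge of $\partial\Delta$ following $v_a$ in the chosen orientation) keeps $\Delta\mapsto\widehat{\Delta}$ single-valued and injective, as in \cite{BE,B}; the remaining verifications are then finite and routine.
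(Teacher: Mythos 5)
Your curvature framework is sound (the formula $c(d_1,\dots,d_9)=-7\pi+\sum 2\pi/d_k$, the bound $c(\Delta)\le-\pi+j\pi/3$, the Gauss--Bonnet contradiction), and your disposal of case (8) by the count $j=3$ is correct. But beyond case (8) the proposal is a plan, not a proof: for the other ten cases you announce a curvature-distribution scheme and then defer exactly the work that would constitute the argument (``I would list the finitely many degree patterns\dots'', ``this propagation-and-exclusion analysis is the principal obstacle'', ``the remaining verifications are then finite and routine''). In your $j=5$ and $j=6$ cases (6), (7), (9) the scheme would have to force a uniquely associated neighbour down to $-2\pi/3$ or $-\pi$, and you give no mechanism for this beyond acknowledging it as the main difficulty. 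That is a genuine gap, not a routine verification.

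The idea you are missing --- and the one the paper's proof of Lemma \ref{lem3} actually turns on --- is that in every case of this lemma, \emph{adjacent} capable corners cannot both have degree $2$ in a reduced diagram, so no region of positive curvature exists at all and no distribution is needed. The mechanism: if $d_\Delta(v_x)=2$ with label $xy^{-1}$, the neighbouring region realizing that label presents, at each corner adjacent to $v_x$, a specific letter determined by the relator (the neighbour of $y^{-1}$ in $s^{-1}$). In the paper's exemplar $a=f$, $h=b$: if $l_\Delta(v_a)=af^{-1}$, the region glued across the $v_av_b$ edge presents at $v_b$ a corner $e^{-1}$ or $g^{-1}$, and neither $be^{-1}$ (not a closed path in $\Gamma$) nor $bg^{-1}$ (needs $g=b$, not admissible here) can complete a degree-$2$ label; hence $d(v_a)=d(v_b)=2$ cannot co-occur, only the two configurations $\{v_a,v_f,v_h\}$ and $\{v_b,v_f,v_h\}$ of degree-$2$ vertices survive, and $c(\Delta)\le c(2,2,2,3,3,3,3,3,3)=0$ for every region. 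The same conflict analysis collapses your worst cases: in (6) and (7) the six capable corners are the consecutive run $v_a,v_b,v_c,v_d$ plus the adjacent pair $v_f,v_g$, and pairwise conflicts along adjacent edges leave at most $2+1=3$ simultaneous degree-$2$ vertices, so again $c(\Delta)\le 0$ and Gauss--Bonnet finishes immediately, with no choice of $\widehat{\Delta}$ and no injectivity argument. Note also why your analogy with Lemma \ref{lem2} misleads you: there $e=i$ makes $ie^{-1}$ admissible, so the corner forced at $v_i$ by $l_\Delta(v_a)=af^{-1}$ \emph{does} complete an admissible label and all four degree-$2$ corners coexist (Figure \ref{3}(i)) --- that is the situation genuinely requiring curvature distribution. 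The hypotheses of Lemma \ref{lem3} are precisely those in which the forced corners never match, which is what makes its proof a one-line curvature count rather than a distribution argument.
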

\begin{proof} Here we prove the case $a=f$, $h=b$. The proofs of the remaining cases follows similarly. 
In this case $\Delta$ is given in Figure \ref{4}.
\begin{figure}[H]
\centering
        \includegraphics[width=9cm]{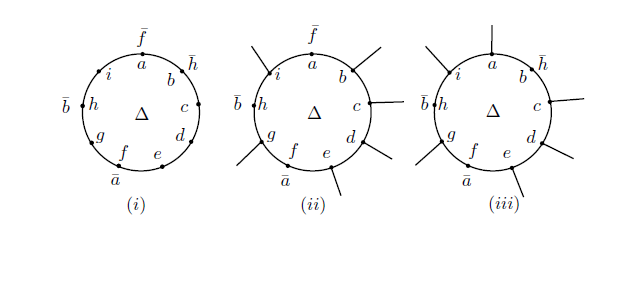}
        \caption{Region $\Delta$}
    \label{4}
\end{figure}
Since $d(v_{a})=d(v_{b})=2$ can not occur together thus there are the following two cases to examine:
\begin{enumerate}
\item[(a)] $d_{\Delta}(v_{a})=d_{\Delta}(v_{f})=d_{\Delta}(v_{h})=2$;
\item[(b)] $d_{\Delta}(v_{b})=d_{\Delta}(v_{f})=d_{\Delta}(v_{h})=2$.
\end{enumerate}
 Clearly, $c(\Delta) \leq 0$  for both these cases, as shown in Figures \ref{4}(ii) and \ref{4}(iii).
\end{proof}

 \begin{lemma}\label{lem4} The presentation $\mathcal{P}=\langle G,t~|~s(t)\rangle$ is aspherical if any one of the following holds:
\begin{enumerate}
\item $h=b,g=c$; 
\item $h=g,d=c$;
\item $h=b,h=c,c=b$;
\item $h=b,h=g,g=b$;
\item $h=c,h=d,d=c$;
\item $h=c,h=g,g=c$;
\item $d=c,d=b,c=b$.
\end{enumerate}
\end{lemma}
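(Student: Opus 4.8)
The plan is to prove all seven cases directly by curvature distribution, showing that in each case every region $\Delta$ of a reduced spherical picture $K$ satisfies $c(\Delta)<0$; summing then contradicts the total curvature $4\pi$ and forces $\mathcal{P}$ to be aspherical, with no compensating region $\widehat{\Delta}$ needed. Recall that $\Delta$ is a $9$-gon with corners $v_a,\dots,v_i$ and that its curvature is governed by the corner degrees, with $c(3,3,3,3,3,3,3,3,3)=-\pi$ and $c(2,2,3,3,3,3,3,3,3)=-\frac{\pi}{3}$. Since every corner has degree at least $2$, a short computation (with $k$ corners of degree $2$ and the rest of degree $3$ one gets $c(\Delta)=-\pi+\frac{k\pi}{3}$) shows that $c(\Delta)>0$ is impossible unless $\Delta$ carries at least four corners of degree $2$. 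A corner $v_x$ can have degree $2$ only if one of its labels in $S$ is admissible, i.e. trivial in $G$, so the whole argument reduces to bounding, in each case, the number of corners that can simultaneously have degree $2$.

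First I would observe that none of the hypotheses of Lemma \ref{lem4} relates $a$ to $f$ or $e$ to $i$; since $G$ is torsion free and $a,e,f,i\neq 1$, none of $af,af^{-1},ei,ei^{-1}$ is admissible, so $v_a,v_f,v_e,v_i$ always have degree at least $3$. Every degree-$2$ corner therefore lies among $v_b,v_c,v_d,v_g,v_h$, with a label $xy^{-1}$ for $x,y\in\{b,c,d,g,h\}$, admissible exactly when $x=y$ in $G$. In each case I would list the trivial such cycles — those named in the hypothesis together with any forced through the Remark — and read off the candidate corners: for instance $\{v_b,v_c,v_g,v_h\}$ in case (1), $\{v_c,v_d,v_g,v_h\}$ in case (2), $\{v_b,v_c,v_h\}$ in case (3), and $\{v_b,v_c,v_d\}$ in case (7), with the remaining cases handled identically.

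Next I would invoke the reducedness of $K$, exactly the obstruction already exploited in Lemma \ref{lem3}: two corners joined by a single $t$-edge cannot both have degree $2$. Placing the candidates on the cyclic word $a\!-\!b\!-\!c\!-\!d\!-\!e\!-\!f\!-\!g\!-\!h\!-\!i$, the only forbidden adjacencies among candidates are $v_b\!-\!v_c$, $v_c\!-\!v_d$ and $v_g\!-\!v_h$, and in each of the seven cases no three candidate corners are pairwise non-adjacent (e.g. $\{v_b,v_g\}$ is realizable in case (1) and $\{v_b,v_d\}$ in case (7), but no admissible triple exists). Hence at most two corners of $\Delta$ have degree $2$ and the rest have degree at least $3$, so
$$c(\Delta)\leq c(2,2,3,3,3,3,3,3,3)=-\frac{\pi}{3}<0.$$
As this holds for every region, the total curvature of $K$ is strictly negative, contradicting $4\pi$; therefore $K$ cannot exist and $\mathcal{P}$ is aspherical.

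The main obstacle is the exact determination, in each of the seven cases, of the admissible cycles among $b,c,d,g,h$: a single unnoticed coincidence would supply a third, pairwise non-adjacent degree-$2$ corner, which already raises $c(\Delta)$ to $0$ (and a fourth would make it positive). Thus the delicate point is to verify that neither the forcing rules of the Remark nor the working equivalences $a\leftrightarrow e^{-1}$, $b\leftrightarrow d^{-1}$, $c\leftrightarrow c^{-1}$, $f\leftrightarrow i^{-1}$, $g\leftrightarrow h^{-1}$ identify any further pair of generators beyond those dictated by the hypotheses. Once the admissible set is pinned down, the adjacency count and the curvature estimate above are routine, and because here the bound is two rather than three the argument is strictly lighter than that of Lemma \ref{lem3}.
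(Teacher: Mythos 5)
Your global strategy is the same as the paper's: in each case list the corners that can possibly have degree $2$, show that at most two of them can have degree $2$ simultaneously, and conclude $c(\Delta)\leq c(2,2,3,3,3,3,3,3,3)=-\frac{\pi}{3}<0$ for every region, contradicting the total curvature $4\pi$. Your curvature bookkeeping ($c(\Delta)=-\pi+\frac{k\pi}{3}$ for $k$ corners of degree $2$) and your lists of candidate corners are correct. The gap is the rule you use to bound $k$: \emph{``two corners joined by a single $t$-edge cannot both have degree $2$, by reducedness.''} Reducedness only forbids a degree-$2$ vertex whose two corners are $x$ and $x^{-1}$ for the \emph{same} letter $x$ of the relator (a cancelling pair); it says nothing about corners carrying different letters. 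The actual mechanism is this: if $d_{\Delta}(v_x)=2$, the second corner at $v_x$ comes from an inverted region $\widehat{\Delta}$ wrapping around both edges at $v_x$, and the relator then \emph{forces} which corner $\widehat{\Delta}$ presents at each neighbouring vertex; one must check whether that forced corner yields a label in $S$. In case (1), for example, $l_{\Delta}(v_b)=bh^{-1}$ forces the corner of $\widehat{\Delta}$ at $v_c$ to be $i^{-1}$, and $ci^{-1}$ is not a closed path in $\Gamma$ (hence not in $S$), so indeed $d_{\Delta}(v_c)\geq 3$; it is this computation, not reducedness, that justifies the paper's assertion that $d(v_b)=d(v_c)=2$ cannot occur there.

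Your rule in fact fails inside this very lemma. In case (7), where $d=c=b$, take the inverted region across the edge joining $v_b$ and $v_c$ presenting the corner $c^{-1}$ at $v_b$ and the corner $d^{-1}$ at $v_c$ (these are consecutive corners of $s(t)^{-1}$): the vertex labels are $bc^{-1}$ and $cd^{-1}$, both trivial in $G$, and no cancelling pair arises since the paired letters differ; so the \emph{adjacent} corners $v_b,v_c$ can both have degree $2$ in a reduced picture. (The same phenomenon appears in Lemma \ref{lem2} of the paper, where the adjacent pairs $v_e,v_f$ and $v_i,v_a$ all have degree $2$ simultaneously.) The conclusion ``at most two'' still survives in case (7), but for a different reason: if $v_b,v_c$ both have degree $2$ they share one inverted region, whose forced corner at $v_d$ is then $e^{-1}$, and $de^{-1}$ is not a cycle in $\Gamma$, so $v_d$ is blocked (symmetrically for the pair $v_c,v_d$). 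So your proof is repairable, but only by replacing the blanket adjacency rule with this case-by-case forced-corner analysis; as written, the key step is false. A smaller slip: the non-admissibility of $af^{\pm1}$ and $ei^{\pm1}$ in these cases is not a consequence of torsion-freeness; it is part of the case hypothesis in the paper's exhaustive classification, torsion-freeness only preventing $af$ and $af^{-1}$ (respectively $ei$ and $ei^{-1}$) from being admissible at the same time.
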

\begin{proof} Here we prove the case $h=b,g=c$. The proof of the remaining cases follows similarly. 
In this case $\Delta$ is given in Figure \ref{5}.
\begin{figure}[H]
\centering
        \includegraphics[width=4cm]{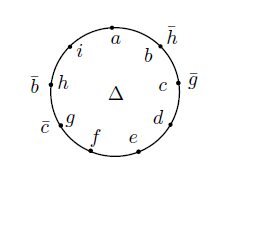}
        \caption{Region $\Delta$}
    \label{5}
\end{figure}
Since $d(v_{b})=d(v_{c})=2$ or $d(v_{g})=d(v_{h})=2$ can not occur together so  $c(\Delta) \leq 0$.
\end{proof}

\begin{lemma}\label{lem5} The presentation $\mathcal{P}=\langle G,t~|~s(t)\rangle$ is aspherical if any one of the following holds:\\
\begin{enumerate}
\item $a=f$ and $R \in \{hc^{-1},hd^{-1}\}$;
\item $a=f,h=b,g=c$;
\item $a=f,h=g,c=b$.
\end{enumerate}

\end{lemma}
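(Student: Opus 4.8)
The plan is to run the curvature-distribution argument of Lemma~\ref{lem2}, because each of the three cases exhibits the same feature: the imposed relations allow exactly four pairwise non-conflicting corners of $\Delta$ to carry degree $2$, so the largest attainable curvature is $c(2,2,2,2,3,3,3,3,3)=\pi/3>0$ and a compensating region must be produced.

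First I would determine, in each case, which corners the relations permit to have degree $2$. Reading the boundary cycle as $a,b,c,d,e,f,g,h,i$, the relation $a=f$ makes $af^{-1}=fa^{-1}=1$, so both $v_a$ and $v_f$ may degenerate; in Case~1 the relation $R\in\{hc^{-1},hd^{-1}\}$ additionally admits $v_h$ together with $v_c$ (resp.\ $v_d$), and since the four corners involved are pairwise non-adjacent on the boundary they may all have degree $2$ at once. In Cases~2 and~3 the three relations admit the six corners $v_a,v_b,v_c,v_f,v_g,v_h$, but the boundary adjacencies $v_a\!-\!v_b$, $v_b\!-\!v_c$, $v_f\!-\!v_g$, $v_g\!-\!v_h$ split these into the two triples $a\,b\,c$ and $f\,g\,h$; since adjacent corners cannot both have degree $2$ (the exclusion already used in Lemmas~\ref{lem3} and~\ref{lem4}), each triple contributes at most its two end corners, again bounding the degree-$2$ count by four and the curvature by $\pi/3$.

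Next, for a region $\Delta$ with $c(\Delta)>0$ I would take four admissible corners to have degree $2$ and the remaining five to have degree $\ge 3$, then enumerate the admissible degree-$3$ labels of those five corners by tracing the length-$3$ closed paths in the star graph $\Gamma$ of Figure~\ref{1}(ii), in the same way that the label $l_\Delta(v_g)=g\{a,a^{-1},f,f^{-1}\}b^{-1}$ and the induced label of $v_h$ were found in Lemma~\ref{lem2}. Torsion-freeness of $G$ keeps this list short, since any closed path yielding a relation $a^m=1$, $e^m=1$, $f^m=1$ or $i^m=1$ with $m\neq 0$ is discarded. Each surviving degree-$3$ label forces the label of a neighbouring corner and thereby selects a region $\widehat{\Delta}$ across that corner; displaying $\Delta$ and $\widehat{\Delta}$ together as in Figure~\ref{3}, I would read off that $\widehat{\Delta}$ carries at least two degree-$2$ corners, so that $c(\widehat{\Delta})\le c(2,2,3,3,3,3,3,3,3)=-\pi/3$ and therefore $c(\Delta)+c(\widehat{\Delta})\le 0$.

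The hard part is this last bookkeeping. For each admissible degree-$3$ labelling I must exhibit a genuine neighbour $\widehat{\Delta}$, confirm that the same relation which forces the degree-$3$ label does not also inflate the degrees inside $\widehat{\Delta}$ and thereby destroy the two degree-$2$ corners it needs, and check that the assignment $\Delta\mapsto\widehat{\Delta}$ is injective so that no region compensates twice. Once every subcase is matched with such a $\widehat{\Delta}$, the inequality $c(\Delta)+c(\widehat{\Delta})\le0$ closes it; summing the curvatures of all regions then yields a total $\le 0$, contradicting the value $4\pi$ and so forcing asphericity of $\mathcal{P}$.
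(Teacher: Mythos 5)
Your combinatorial setup is sound: in each case at most four pairwise compatible corners of $\Delta$ can have degree $2$, so $c(\Delta)\leq c(2,2,2,2,3,3,3,3,3)=\pi/3$, and your identification of those corners ($v_a,v_c,v_f,v_h$ in the representative case $a=f$, $h=c$) agrees with the paper. But the continuation you commit to is not what the situation calls for, and its substance is left unexecuted. The paper's proof of this lemma uses no compensating region at all: once $l_{\Delta}(v_a)=af^{-1}$, $l_{\Delta}(v_c)=ch^{-1}$, $l_{\Delta}(v_f)=fa^{-1}$ and $l_{\Delta}(v_h)=hc^{-1}$ are in place, the two corners flanked by these degree-$2$ vertices have their star-graph labels forced through two prescribed edges each, namely $l_{\Delta}(v_b)=bg^{-1}g^{-1}w$ with $w\in\{a,a^{-1},f,f^{-1}\}$ and $l_{\Delta}(v_i)=ie^{-1}d^{-1}w$ with $w\in\{b,c,g,h\}$, so that $d_{\Delta}(v_b)>3$ and $d_{\Delta}(v_i)>3$. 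Hence $c(\Delta)\leq c(2,2,2,2,3,3,3,4,4)=0$: the positively curved region you propose to compensate never exists, and asphericity follows from this purely local bound, with no neighbour $\widehat{\Delta}$, no transfer of curvature, and no injectivity check.

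This forcing of two degree-$\geq 4$ corners is exactly the idea your plan is missing, and its absence is not cosmetic. Your argument rests on three unverified claims: that each positive region has a genuine neighbour $\widehat{\Delta}$ across a forced corner, that $\widehat{\Delta}$ can be ``read off'' to have two degree-$2$ corners giving $c(\widehat{\Delta})\leq -\pi/3$, and that the assignment $\Delta\mapsto\widehat{\Delta}$ is injective. None of these is established -- you yourself defer them as ``the hard part'' -- and in the paper's other lemmas where distribution genuinely is needed (Lemmas \ref{lem2}, \ref{7}, \ref{lem8}, \ref{lem9}) each such claim requires its own case-by-case verification with figures. Had you actually carried out the enumeration you describe, tracing length-$3$ closed paths in $\Gamma$ for $v_b$ and $v_i$ subject to the constraints imposed by both flanking degree-$2$ vertices, you would have found that no degree-$3$ label exists for those corners, collapsing your scheme into the direct argument above. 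As written, the proposal predicts an outcome for its bookkeeping (positive regions needing compensation) that is not the one that occurs, and supplies no verification in its place; that is a genuine gap.
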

\begin{proof}  Here we prove the case $a=f, h=c$. The proof of the remaining cases follows similarly.
In this case, $\Delta$ is given in Figure \ref{6}(i).
\begin{figure}[H]
\centering
        \includegraphics[width=6.5cm]{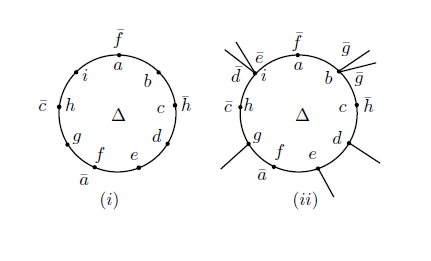}
        \caption{Region $\Delta$}
    \label{6}
\end{figure}
Here $d_{\Delta}(v_{a})=d_{\Delta}(v_{c})=d_{\Delta}(v_{f})=d_{\Delta}(v_{h})=2$ which implies $l_{\Delta}(v_{a})=af^{-1}$, $l_{\Delta}(v_{c})=ch^{-1}$, $l(v_{f})=fa^{-1}$, and $l_{\Delta}(v_{h})=hc^{-1}$ as shown in Figure \ref{6}(ii). To obtain positive curvature the degree of the remaining vertices must be at least three. Notice that $l_{\Delta}(v_{a})=af^{-1}$ and $l_{\Delta}(v_{c})=ch^{-1}$ implies that $l_{\Delta}(v_{b})=bg^{-1}g^{-1}w$ where $w \in \{a,a^{-1},f,f^{-1}\}$ which implies $ d_{\Delta}(v_{b}) > 3 $. Observe that $ l_{\Delta}(v_{a})=af^{-1} $ and $ l_{\Delta}(v_{h})=hc^{-1} $ implies that $ l_{\Delta}(v_{i})=ie^{-1}d^{-1}w $ where $ w \in \{b,c,g,h\} $ which implies $ d_{\Delta}(v_{i}) > 3 $. Since $ d_{\Delta}(v_{b}) > 3 $ and $ d_{\Delta}(v_{i}) > 3 $ so $ c(\Delta) \leq 0 $.
\end{proof}

\begin{lemma}\label{lem6} The presentation $\mathcal{P}=\langle G,t~|~s(t)\rangle$ is aspherical if any one of the following holds:\\
\begin{enumerate}
\item $a=f,h=b,g=d$;
\item $a=f,h=c$ and $R \in \{gd^{-1},db^{-1}\}$;
\item $a=f,h=g$ and $R \in \{db^{-1},dc^{-1}\}$.
\end{enumerate}
\end{lemma}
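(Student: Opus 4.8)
The plan is to follow the curvature distribution method established in Lemmas \ref{lem2}--\ref{lem5}, treating the three items as variations of a single geometric analysis. I would begin by selecting a representative case, say item (1) with $a=f$, $h=b$, $g=d$, and draw the region $\Delta$, recording the forced vertex labels. As in Lemma \ref{lem2}, the hypothesis $a=f$ forces $l_{\Delta}(v_a)=af^{-1}$ and $l_{\Delta}(v_f)=fa^{-1}$, so both $v_a$ and $v_f$ have degree $2$; the additional coincidences $h=b$ and $g=d$ impose the corresponding degree-$2$ possibilities at $v_b,v_h$ and $v_g,v_d$. The first step is to enumerate which of these degree-$2$ vertices can occur simultaneously, using the fact (exploited repeatedly in Lemmas \ref{lem3}--\ref{lem4}) that certain pairs such as $d(v_b)=d(v_h)=2$ or $d(v_g)=d(v_d)=2$ cannot occur together because the star-graph paths would force a relation contradicting torsion-freeness of $G$.

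Next I would compute the curvature $c(\Delta)$ in each surviving configuration. Wherever the incompatibilities already force enough vertices to have degree $\geq 3$ (or $>3$), the bound $c(\Delta)\leq 0$ follows directly from the corner-count formula $c(d_1,\dots,d_9)$, exactly as in Lemma \ref{lem4} and Lemma \ref{lem5}, and no redistribution is needed. In the remaining configurations where $\Delta$ can carry positive curvature, I would locate the uniquely associated neighbouring region $\widehat{\Delta}$ and show $c(\widehat{\Delta})+c(\Delta)\leq 0$. Concretely, I would trace the labels across the shared edge to determine the degree sequence of $\widehat{\Delta}$: the coincidence $a=f$ (via $x=t^{-1}a^{-1}t$-type bookkeeping) typically produces two degree-$2$ vertices in $\widehat{\Delta}$, yielding a bound like $c(\widehat{\Delta})\leq c(2,2,3,3,3,3,3,3,3)=-\pi/3$, which cancels the $c(\Delta)\leq \pi/3$ contribution. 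Items (2) and (3), where $R$ ranges over a two-element set, are handled by the same argument applied to each choice of $R$ separately, since each value of $R$ fixes a different admissible vertex label and hence a slightly different forced-degree pattern.

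The main obstacle I anticipate is \emph{bookkeeping the label propagation} rather than any single hard estimate: for each case one must correctly deduce, from the admissible degree-$2$ labels at $v_a,v_f,v_h,v_g$ and the star-graph $\Gamma$, what the labels $l_{\Delta}(v_b)$, $l_{\Delta}(v_i)$, etc.\ are forced to be, and in particular which of these are pushed to degree $>3$. A subtle point, already visible in the proof of Lemma \ref{lem2}, is the conditional degree argument of the form ``either $d_{\widehat{\Delta}}(v_{\ast})=3$ or $d_{\widehat{\Delta}}(v_{\ast\ast})=2$, and since one already holds the other must fail,'' which upgrades a degree-$3$ vertex to degree $4$ and improves the bound to $c(2,2,3,3,3,3,3,3,4)=-\pi/2$. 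I would verify that such an upgrade is available in precisely those subcases where $\Delta$ attains its maximal positive curvature $\pi/6$, so that the compensating inequality $c(\widehat{\Delta})+c(\Delta)\leq 0$ still closes. Once the representative case is fully worked out, the remaining items follow by the same recipe, and I would present only the distinguishing figures and degree sequences, stating that the other cases ``follow similarly'' in keeping with the style of the preceding lemmas.

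\begin{proof}
[Proof sketch to be completed]
\end{proof}
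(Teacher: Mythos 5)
Your general recipe (enumerate the degree-$2$ configurations, propagate labels through the star graph to force high degrees at other vertices, then either bound $c(\Delta)\leq 0$ directly or compensate via a neighbouring region $\widehat{\Delta}$) is the same curvature-distribution method the paper uses for this lemma. However, your first step rests on a false claim that would derail the case analysis. You assert that pairs such as $d(v_b)=d(v_h)=2$ or $d(v_d)=d(v_g)=2$ cannot occur simultaneously. That is backwards: the incompatibilities actually exploited in Lemmas \ref{lem3} and \ref{lem4} are between \emph{adjacent} vertices of $\Delta$ (e.g.\ $d(v_a)=d(v_b)=2$ in Lemma \ref{lem3}; $d(v_b)=d(v_c)=2$ and $d(v_g)=d(v_h)=2$ in Lemma \ref{lem4}), whereas the non-adjacent ``partner'' vertices produced by a coincidence like $h=b$ can perfectly well both have degree $2$. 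Indeed, case (b) of Lemma \ref{lem3} has $v_b$ and $v_h$ simultaneously of degree $2$, and, more to the point, the paper's proof of the present lemma must deal with exactly the configuration $d_{\Delta}(v_b)=d_{\Delta}(v_d)=d_{\Delta}(v_f)=d_{\Delta}(v_h)=2$. Under your exclusion this configuration would be dismissed a priori, yet it is a genuine one with four degree-$2$ vertices, so a priori $c(\Delta)$ can be as large as $c(2,2,2,2,3,3,3,3,3)=\pi/3>0$; skipping it leaves a hole in the proof. (Conversely, your enumeration would admit configurations such as $d(v_a)=d(v_b)=2$ that are actually impossible, which is only wasted effort, not an error, but it shows the incompatibility criterion you are using is not the right one.)

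For comparison, here is how the paper closes item (1): it examines the two admissible maximal configurations, $d(v_a)=d(v_d)=d(v_f)=d(v_h)=2$ and $d(v_b)=d(v_d)=d(v_f)=d(v_h)=2$, and in each shows by label propagation that two further vertices are forced to have degree $>3$ (for instance, in the first configuration $l_{\Delta}(v_e)=ei^{-1}h^{-1}w$ with $w\in\{b,c,d,g\}$ and $l_{\Delta}(v_i)=ie^{-1}c^{-1}w$ with $w\in\{b,d,g,h\}$), whence $c(\Delta)\leq c(2,2,2,2,3,3,3,4,4)=0$. In particular, contrary to what you anticipate in your second and third paragraphs, no transfer of curvature to $\widehat{\Delta}$ and no ``either degree $3$ or degree $2$'' upgrade argument is needed anywhere in this lemma: the direct bound closes every case, here and in items (2) and (3). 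A smaller point of precision: the hypothesis $a=f$ does not \emph{force} $d(v_a)=d(v_f)=2$; it only makes the labels $af^{-1}$, $fa^{-1}$ admissible, so these vertices \emph{may} have degree $2$, and the case analysis is over which of the admissible degree-$2$ labels are actually realized. Once you replace your incompatibility criterion by the correct adjacent-vertex one and restore the missing configuration, your plan would reproduce the paper's argument.
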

\begin{proof} Here we prove the case $a=f, h=b, g=d$. The proof of the remaining cases follows similarly.
In this case, $\Delta$ is given in Figure \ref{7}(i).\\
\begin{figure}[H]
\centering
        \includegraphics[width=9cm]{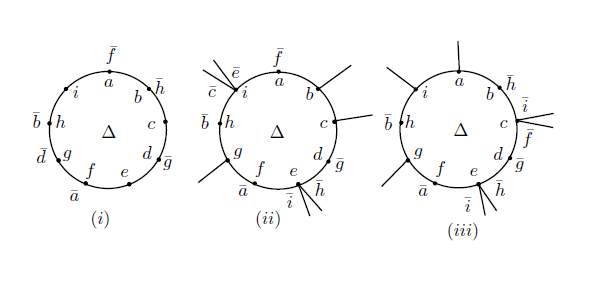}
        \caption{Region $\Delta$}
    \label{7}
\end{figure}

There are the following two cases to examine:
\begin{enumerate}
\item Here $d_{\Delta}(v_{a})=d_{\Delta}(v_{d})=d_{\Delta}(v_{f})=d_{\Delta}(v_{h})=2$ which implies $l_{\Delta}(v_{a})=af^{-1}$,$l_{\Delta}(v_{d})=dg^{-1}$,$l(v_{f})=fa^{-1}$, and $l_{\Delta}(v_{h})=hb^{-1}$ as shown in Figure \ref{7}(ii). To obtain positive curvature the degree of the remaining vertices must be at least three. Notice that $l_{\Delta}(v_{d})=dg^{-1}$ and $l_{\Delta}(v_{f})=fa^{-1}$ implies that $l_{\Delta}(v_{e})=ei^{-1}h^{-1}w$ where $w \in \{b,c,d,g\}$ which implies $ d_{\Delta}(v_{e}) > 3 $. Observe that $ l_{\Delta}(v_{a})=af^{-1} $ and $ l_{\Delta}(v_{h})=hb^{-1} $ implies that $ l_{\Delta}(v_{i})=ie^{-1}c^{-1}w $ where $ w \in \{b,d,g,h\} $ which implies $ d_{\Delta}(v_{i}) > 3 $. Since $ d_{\Delta}(v_{e}) > 3 $ and $ d_{\Delta}(v_{i}) > 3 $ so $ c(\Delta) \leq 0 $.
\item Here $d_{\Delta}(v_{b})=d_{\Delta}(v_{d})=d_{\Delta}(v_{f})=d_{\Delta}(v_{h})=2$ which implies $l_{\Delta}(v_{b})=bh^{-1}$,$l_{\Delta}(v_{d})=dg^{-1}$,$l(v_{f})=fa^{-1}$, and $l_{\Delta}(v_{h})=hb^{-1}$ as shown in Figure \ref{7}(iii). To obtain positive curvature the degree of the remaining vertices must be at least three. Notice that $l_{\Delta}(v_{b})=bh^{-1}$ and $l_{\Delta}(v_{d})=dg^{-1}$ implies that $l_{\Delta}(v_{c})=cf^{-1}i^{-1}w$ where $w \in \{b^{-1},d^{-1},g^{-1},h^{-1}\}$ which implies $ d_{\Delta}(v_{c}) > 3 $. Observe that $ l_{\Delta}(v_{d})=dg^{-1} $ and $ l_{\Delta}(v_{a})=af^{-1} $ implies that $ l_{\Delta}(v_{e})=ei^{-1}h^{-1}w $ where $ w \in \{b,c,d,g\} $ which implies $ d_{\Delta}(v_{e}) > 3 $. Since 
$ d_{\Delta}(v_{c}) > 3 $ and $ d_{\Delta}(v_{e}) > 3 $ so $ c(\Delta) \leq 0 $.
\end{enumerate}
\end{proof}

\begin{lemma}\label{7} The presentation $\mathcal{P}=\langle G,t~|~s(t)\rangle$ is aspherical if 
$a=f,g=b$.
\end{lemma}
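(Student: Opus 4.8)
The plan is to follow the template of Lemma~\ref{lem2}, to which this case is directly analogous: two independent admissibility conditions are imposed, each contributing a pair of degree-two vertices. Under $a=f$ the vertices $v_a,v_f$ receive $l_\Delta(v_a)=af^{-1}$ and $l_\Delta(v_f)=fa^{-1}$, while under $g=b$ the vertices $v_b,v_g$ receive $l_\Delta(v_b)=bg^{-1}$ and $l_\Delta(v_g)=gb^{-1}$; these four are the only candidates for degree two. The first thing I would verify is that, unlike the $a=f,h=b$ configuration of Lemma~\ref{lem3} (where $v_a,v_b$ cannot both have degree two, because the region forced by $d_\Delta(v_a)=2$ places $v_e$, not a matching corner, at the vertex of $v_b$), here the region forced by $d_\Delta(v_a)=2$ places $v_g$ at the vertex of $v_b$; since $g=b$ this is admissible, so the pairings $v_a\leftrightarrow v_f$ and $v_b\leftrightarrow v_g$ are realised simultaneously by a single neighbouring region and all four vertices may have degree two at once. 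Hence $c(\Delta)>0$ requires $v_c,v_d,v_e,v_h,v_i$ all of degree at least three, giving $c(\Delta)\le c(2,2,2,2,3,3,3,3,3)=\pi/3$.

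I would then pin down the labels exactly as in Lemma~\ref{lem2}. Assuming $d_\Delta(v_h)=3$ (a larger value only decreases the curvature), the degree-two vertex $v_g$ adjacent to $v_h$ forces $l_\Delta(v_h)=h\{a,a^{-1},f,f^{-1}\}c^{-1}$: the trailing $c^{-1}$ is contributed by the region realising $v_g\leftrightarrow v_b$, and the middle letter is one of the two loop-generators $a,f$ that can occupy the intervening corner. This yields the four sub-cases $l_\Delta(v_h)\in\{hac^{-1},ha^{-1}c^{-1},hfc^{-1},hf^{-1}c^{-1}\}$. Reading off $l_\Delta(v_i)$ from the pairing $v_a\leftrightarrow v_f$ (its trailing letter being $e^{-1}$), one finds that a middle letter $a$ or $f$ leaves $l_\Delta(v_i)$ of length three, whereas $a^{-1}$ or $f^{-1}$ forces it to length four, so $d_\Delta(v_i)\ge4$. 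Thus sub-cases $1,3$ give $c(\Delta)\le\pi/3$ and sub-cases $2,4$ give $c(\Delta)\le c(2,2,2,2,3,3,3,3,4)=\pi/6$.

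Finally I would run the curvature distribution. In each sub-case I take $\widehat\Delta$ to be the region across the arc carrying the middle letter of $l_\Delta(v_h)$; the decisive claim is that in the ambient diagram this $\widehat\Delta$ has only two degree-two vertices, its other candidates being blocked by exactly the incompatibility used in Lemma~\ref{lem3} (the fixed gluing to $\Delta$ forces a non-matching corner at them). Then $c(\widehat\Delta)\le c(2,2,3,3,3,3,3,3,3)=-\pi/3$ in sub-cases $1,3$, while in sub-cases $2,4$ one further vertex of $\widehat\Delta$ is pushed to degree four, giving $c(\widehat\Delta)\le c(2,2,3,3,3,3,3,3,4)=-\pi/2$; in all cases $c(\Delta)+c(\widehat\Delta)\le0$. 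Since every positively curved region is assigned such a $\widehat\Delta$, the total curvature is non-positive and $\mathcal{P}$ is aspherical. I expect the main obstacle to be precisely the bookkeeping of the middle paragraph: tracking the forced labels around $v_h,v_i$ and confirming that the Lemma~\ref{lem3}-type blocking really does cap the degree-two count of $\widehat\Delta$ at two, since the global hypotheses $a=f,g=b$ otherwise propagate four degree-two vertices to every region.
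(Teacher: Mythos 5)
Your overall scheme is the same as the paper's: the four degree-two vertices $v_a,v_f,v_b,v_g$ with labels $af^{-1},fa^{-1},bg^{-1},gb^{-1}$, the four sub-cases $l_{\Delta}(v_h)\in\{hac^{-1},ha^{-1}c^{-1},hfc^{-1},hf^{-1}c^{-1}\}$ with induced labels $l_{\Delta}(v_i)=ie^{-1}\{i,b^{-1},e,g^{-1}\}$, and the distribution of $c(\Delta)$ into one neighbouring region $\widehat{\Delta}$. But two of your intermediate claims are wrong, and one of them fails at exactly the point where the distribution has no slack.

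First, in sub-case 3 you assert that the middle letter $f$ ``leaves $l_{\Delta}(v_i)$ of length three.'' It does not: the induced label is $ie^{-1}e$, which forces $i=1$, a contradiction; hence $d_{\Delta}(v_i)\geq 4$ and in fact $c(\Delta)\leq \pi/6$ there (this is how the paper treats it). Your weaker bound $\pi/3$ is still an upper bound, so this slip alone is survivable, but it shows the label bookkeeping was not actually carried through. Second, and fatally, your ``decisive claim'' --- that each $\widehat{\Delta}$ has at most two degree-two vertices, so that $c(\widehat{\Delta})\leq -\pi/2$ in sub-cases 2 and 4 --- is false in sub-case 4: the paper's region $\widehat{\Delta}$ there has \emph{three} degree-two vertices ($v_{a^{-1}},v_{b^{-1}},v_{h^{-1}}$), and the correct estimate is $c(\widehat{\Delta})\leq c(2,2,2,3,3,3,3,3,4)=-\pi/6$. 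This is precisely the propagation phenomenon you flag as a worry in your final sentence, and it does materialize. The proof still closes in the paper, but only because $c(\Delta)\leq\pi/6$ in that sub-case (using $d_{\Delta}(v_i)\geq 4$) and the third degree-two vertex of $\widehat{\Delta}$ is offset by a vertex of degree at least four, giving $c(\Delta)+c(\widehat{\Delta})\leq \pi/6-\pi/6=0$ with zero margin. As your argument stands --- $\pi/3$ (or $\pi/6$) for $c(\Delta)$ set against an unproved $-\pi/2$ for $c(\widehat{\Delta})$ --- sub-case 4 is not established, and sub-case 3 (where you need $-\pi/3$ but have not ruled out a third degree-two vertex of $\widehat{\Delta}$) is also left open; even in sub-case 1 the cap of two requires the paper's explicit argument that $d_{\widehat{\Delta}}(v_a)=3$ excludes $d_{\widehat{\Delta}}(v_b)=2$, not a blanket appeal to Lemma~\ref{lem3}-type blocking.
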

\begin{proof} Here $l_{\Delta}(v_{a})=af^{-1}$, $l_{\Delta}(v_{b})=bg^{-1}$,
 $l_{\Delta}(v_{f})=fa^{-1}$, $l_{\Delta}(v_{g})=gb^{-1}$ as given in Figure \ref{8}(i).
 \begin{figure}[H]
\centering
        \includegraphics[width=9cm]{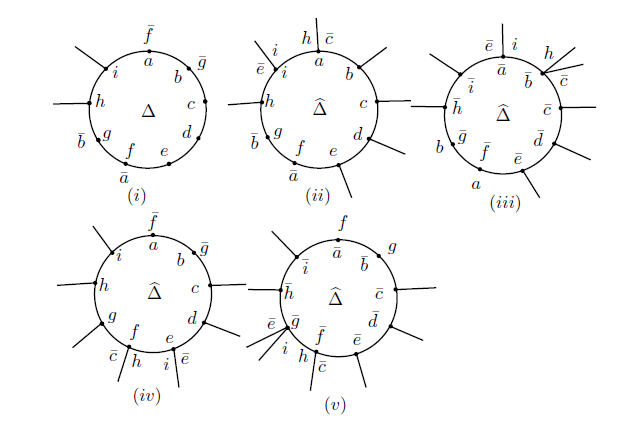}
        \caption{Regions $\Delta$ and $\widehat{\Delta}$}
    \label{8}
\end{figure} 
To obtain positive curvature the degree of the remaining vertices must be at least three. Suppose $d_{\Delta}(v_{h})=3$ and $d(v_{i})=3$ or $d(v_{i})> 3$. Thus $l_{\Delta}(v_{h})=h^{-1}\{a,a^{-1},f,f^{-1}\}c^{-1}$ which implies $l_{\Delta}(v_{i})=ie^{-1}\{i,b^{-1}, e, g^{-1}\}$. There are the following four cases to examine:
\begin{enumerate}
\item $l_{\Delta}(v_{h})=hac^{-1}$;
\item $l_{\Delta}(v_{h})=ha^{-1}c^{-1}$;
\item $l_{\Delta}(v_{h})=hfc^{-1}$;
\item $l_{\Delta}(v_{h})=hf^{-1}c^{-1}$.
\end{enumerate}
  
\begin{enumerate}
\item Since $l_{\Delta}(v_{h})=hac^{-1}$ so $l_{\Delta}(v_{i})=ie^{-1}i$. Add $c(\Delta)\leq \dfrac{\pi}{3}$ to $c(\widehat{\Delta})$ is given by Figure \ref{8}(ii). Observe that $d_{\widehat{\Delta}}(v_{f})=d_{\widehat{\Delta}}(v_{g})=2$. Notice that either $d_{\widehat{\Delta}}(v_{a})=3$ or $d_{\widehat{\Delta}}(v_{b})=2$ since $d_{\widehat{\Delta}}(v_{a})=3$ already present so $d_{\widehat{\Delta}}(v_{b})> 2$ otherwise contradiction occurs and all other vertices have degree at least 3. Therefore $c(\widehat{\Delta}) \leq c(2,2,3,3,3,3,3,3,3) = \dfrac{-\pi}{3}$.
\item Since $l_{\Delta}(v_{h})=ha^{-1}c^{-1}$ so $l_{\Delta}(v_{i})=ie^{-1}b^{-1}w$ where $w\in \{c,d,g,h\}$ which implies  $d_{\Delta}(v_{i})> 3$. Add $c(\Delta)\leq \dfrac{\pi}{6}$ to $c(\widehat{\Delta})$ is given by Figure \ref{8}(iii). Notice that $d_{\widehat{\Delta}}(v_{f^{-1}})=d_{\widehat{\Delta}}(v_{g^{-1}})=2$ and all other vertices have degree at least 3. Therefore $c(\widehat{\Delta}) \leq c(2,2,3,3,3,3,3,3,4) = \dfrac{-\pi}{2}$.
\item Since $l_{\Delta}(v_{h})=hfc^{-1}$ so $l_{\Delta}(v_{i})=ie^{-1}e$ yields a contradiction i.e $i=1$. Add $c(\Delta)\leq \dfrac{\pi}{6}$ to $c(\widehat{\Delta})$ is given by Figure \ref{8}(iv). Observe that $d_{\Delta}(v_{a})=d_{\Delta}(v_{b})=2$. Notice that either $d_{\widehat{\Delta}}(v_{f})=3$ or $d_{\widehat{\Delta}}(v_{g})=2$ since $d_{\widehat{\Delta}}(v_{f})=3$ already present so $d_{\widehat{\Delta}}(v_{g})> 2$ otherwise contradiction occurs and all other vertices have degree at least 3. Therefore $c(\widehat{\Delta}) \leq c(2,2,3,3,3,3,3,3,4) = \dfrac{-\pi}{2}$.
\item Since $l_{\Delta}(v_{h})=hf^{-1}c^{-1}$ so $l_{\Delta}(v_{i})=ie^{-1}g^{-1}w$ where $w\in \{b,c,d,h\}$ which implies  $d_{\Delta}(v_{i})> 3$. Add $c(\Delta)\leq \dfrac{\pi}{6}$ to $c(\widehat{\Delta})$ is given by Figure \ref{8}(v). Notice that $d_{\widehat{\Delta}}(v_{a^{-1}})=d_{\widehat{\Delta}}(v_{b^{-1}})=d_{\widehat{\Delta}}(v_{h^{-1}})=2$ and all other vertices have degree at least 3. Therefore $c(\widehat{\Delta}) \leq c(2,2,2,3,3,3,3,3,4)= \dfrac{-\pi}{6}$.
\end{enumerate}
\end{proof}
\begin{lemma} \label{lem8} The presentation $\mathcal{P}=\langle G,t~|~s(t)\rangle$ is aspherical if 
$h=c,g=b$.
\end{lemma}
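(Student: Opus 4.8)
The plan is to run the curvature distribution argument in the same spirit as Lemmas \ref{lem3}--\ref{lem5}, beginning by determining which vertices of a region $\Delta$ of the reduced spherical diagram $K$ can have degree $2$. The members of $S$ made trivial in $G$ by the hypotheses $h=c$ and $g=b$ are exactly $gb^{-1}$ and $hc^{-1}$; since $G$ is torsion free and no further relation is assumed, none of $af^{\pm1}$, $ei^{\pm1}$, $hd^{-1}$, $gd^{-1}$, $dc^{-1}$, $db^{-1}$ is trivial, so by the Remark $d_\Delta(v_a),d_\Delta(v_d),d_\Delta(v_e),d_\Delta(v_f),d_\Delta(v_i)\geq 3$. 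Hence the only vertices that can have degree $2$ are $v_b,v_c$ (with labels $bg^{-1},ch^{-1}$) and $v_g,v_h$ (with labels $gb^{-1},hc^{-1}$), and $\{v_b,v_c\}$ and $\{v_g,v_h\}$ are precisely the two adjacent pairs of $\Delta$ among these. If at most three of the four have degree $2$, then $c(\Delta)\leq c(2,2,2,3,3,3,3,3,3)=0$; so the whole problem reduces to the configuration in which all four have degree $2$.

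Assume therefore $d_\Delta(v_b)=d_\Delta(v_c)=d_\Delta(v_g)=d_\Delta(v_h)=2$. Reading the star graph $\Gamma$, the region meeting $\Delta$ along the two consecutive arcs at $v_b,v_c$ is a single copy of the relator read as $s(t)^{-1}$, and matching the partners $bg^{-1}$, $ch^{-1}$ forces its corners at $v_a,v_b,v_c,v_d$ to be $f^{-1},g^{-1},h^{-1},i^{-1}$; symmetrically, the region along the arcs at $v_g,v_h$ presents $a^{-1},b^{-1},c^{-1},d^{-1}$ at $v_f,v_g,v_h,v_i$. In particular $l_\Delta(v_d)$ begins $d\,i^{-1}$ and $l_\Delta(v_i)$ begins $i\,d^{-1}$. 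Tracking the admissible continuations in $\Gamma$, the only way $l_\Delta(v_d)$ (resp.\ $l_\Delta(v_i)$) can close at length three is if $G$ satisfies $i=db^{-1}$ or $i=dc^{-1}$. Away from these two relations the labels cannot close at length three, so $d_\Delta(v_d)\geq 4$ and $d_\Delta(v_i)\geq 4$, whence $c(\Delta)\leq c(2,2,2,2,3,3,3,4,4)=0$. Thus, outside the exceptional relations $i=db^{-1}$ and $i=dc^{-1}$, every region of $K$ has non-positive curvature and $\mathcal{P}$ is aspherical by the curvature distribution criterion.

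It then remains to dispose of the two exceptional relations, and this is the step I expect to be the main obstacle. When $i=db^{-1}$ or $i=dc^{-1}$ the vertices $v_d,v_i$ may have degree $3$, so a region with all four through-vertices of degree $2$ can carry $c(\Delta)=\pi/3>0$ and the simple degree count no longer suffices. Here I would revert to genuine curvature distribution as in Lemma \ref{7}: for each such positively curved $\Delta$ I would use the two forced relator copies described above to locate a uniquely associated neighbour $\widehat\Delta$ carrying two vertices of degree $2$, so that $c(\widehat\Delta)\leq c(2,2,3,3,3,3,3,3,3)=-\pi/3$ and hence $c(\widehat\Delta)+c(\Delta)\leq 0$. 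Unlike Lemma \ref{lem5}, where two degree-$2$ vertices immediately inflate a third vertex beyond degree $3$, here the two adjacent pairs are individually consistent, so the compensation must be read off globally from the $t$-orientations and partners in $\Gamma$; verifying that the assignment $\Delta\mapsto\widehat\Delta$ is well defined and injective in these subcases is the delicate part of the argument.
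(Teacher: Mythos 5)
Your reduction is sound and matches the paper's setup: under $h=c$, $g=b$ the only admissible degree-$2$ labels are $gb^{-1}$ and $hc^{-1}$, so only $v_b,v_c,v_g,v_h$ can have degree $2$; with at most three of them of degree $2$ one gets $c(\Delta)\leq c(2,2,2,3,3,3,3,3,3)=0$; and when all four have degree $2$ the two inverse copies of the relator across the pairs $\{v_b,v_c\}$ and $\{v_g,v_h\}$ do force the corners $f^{-1},i^{-1},a^{-1},d^{-1}$ at $v_a,v_d,v_f,v_i$. Your computation that a degree-$3$ closure at $v_d$ or $v_i$ requires $i=db^{-1}$ or $i=dc^{-1}$ (recall $b=g=1$ and $h=c$, so these are $i=d$ and $i=dc^{-1}$) agrees exactly with the paper: its five labels $ibd^{-1},icd^{-1},idd^{-1},igd^{-1},ihd^{-1}$ for $v_i$ close precisely under those relations, the label $idd^{-1}$ being killed outright since $i\neq 1$. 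So the generic half of your argument, giving $c(\Delta)\leq c(2,2,2,2,3,3,3,4,4)=0$, is correct.

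The genuine gap is that you stop exactly where the proof begins. The relations $i=db^{-1}$ and $i=dc^{-1}$ are not excluded by the case hypotheses (they involve no label from $S$, so for instance $i=d$ together with $f=a^2$ is perfectly consistent with $h=c$, $g=b$ in a torsion-free group), hence positively curved regions with $c(\Delta)=\pi/3$ or $\pi/6$ genuinely occur and cannot be wished away. The paper's proof of this lemma \emph{is} the treatment of these configurations: for each of the surviving labels $l_{\Delta}(v_i)\in\{ibd^{-1},icd^{-1},igd^{-1},ihd^{-1}\}$ (with the correlated forced label $af^{-1}\{a,b,f,g\}\ldots$ at $v_a$, which in two of the cases forces $d_{\Delta}(v_a)>3$ and hence $c(\Delta)\leq\pi/6$), it exhibits a specific neighbour $\widehat{\Delta}$ in Figures \ref{9}(ii)--(v), identifies two or three of $\widehat{\Delta}$'s vertices of degree $2$, rules out small degrees at certain other vertices of $\widehat{\Delta}$ (using contradictions such as $a=1$, $i=1$, or excluded members of $S$), and thereby proves $c(\widehat{\Delta})\leq -\pi/3$ or $-\pi/2$, so that $c(\Delta)+c(\widehat{\Delta})\leq 0$. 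Your proposal replaces all of this with a statement of intent, and you yourself flag the well-definedness and injectivity of the assignment $\Delta\mapsto\widehat{\Delta}$ as unverified. Since the curvature-distribution verification in the exceptional cases is the entire mathematical content of the lemma beyond the routine degree count, the proposal as written does not prove the statement.
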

\begin{proof}
Here $l_{\Delta}(v_{b})=bg^{-1}$, $l_{\Delta}(v_{c})=ch^{-1}$,
 $l_{\Delta}(v_{g})=gb^{-1}$, $l_{\Delta}(v_{h})=hc^{-1}$ as given in Figure \ref{9}(i).
To obtain positive curvature the degree of the remaining vertices must be at least three. Suppose $d_{\Delta}(v_{i})=3$ and $d_{\Delta}(v_{a})=3$ or $d_{\Delta}(v_{a}) > 3$. Thus $l_{\Delta}(v_{i})=i\{b,c,d,g,h\}d^{-1}$ which implies $l_{\Delta}(v_{a})=af^{-1}\{a, b, c, f, g\}$. There are the following five cases to examine:
\begin{enumerate}
\item $l_{\Delta}(v_{i})=ibd^{-1}$;
\item $l_{\Delta}(v_{i})=icd^{-1}$;
\item $l_{\Delta}(v_{i})=idd^{-1}$;
\item $l_{\Delta}(v_{i})=igd^{-1}$;
\item $l_{\Delta}(v_{i})=ihd^{-1}$.
\end{enumerate}
\begin{figure}[H]
\centering
        \includegraphics[width=9cm]{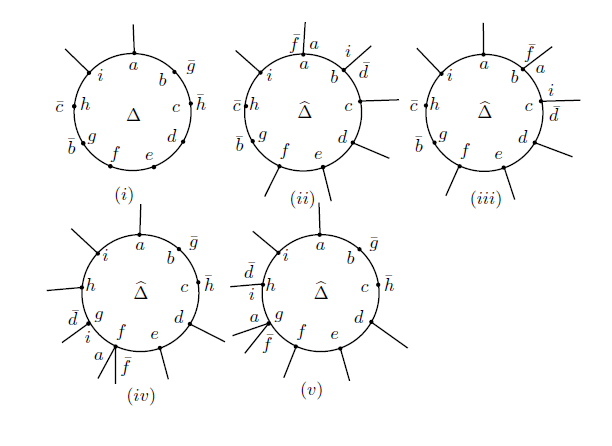}
       \caption{Regions $\Delta$ and $\widehat{\Delta}$}
    \label{9}
\end{figure}
 
\begin{enumerate}
\item Since $l_{\Delta}(v_{i})=ibd^{-1}$ so $l_{\Delta}(v_{a})=af^{-1}a$. Add $c(\Delta)\leq \dfrac{\pi}{3}$ to $c(\widehat{\Delta})$ is given by Figure \ref{9}(ii). Observe that $d_{\widehat{\Delta}}(v_{g})=d_{\widehat{\Delta}}(v_{h})=2$. Notice that either $d_{\widehat{\Delta}}(v_{b})=3$ or $d_{\widehat{\Delta}}(v_{c})=2$ since $d_{\widehat{\Delta}}(v_{b})=3$ already present so $d_{\widehat{\Delta}}(v_{c})> 2$ otherwise contradiction occurs and all other vertices have degree at least 3. Therefore $c(\widehat{\Delta}) \leq c(2,2,3,3,3,3,3,3,3) = \dfrac{-\pi}{3}$.
\item Since $l_{\Delta}(v_{i})=icd^{-1}$ so $l_{\Delta}(v_{a})=af^{-1}bw$ where $w\in \{c^{-1},d^{-1},g^{-1},h^{-1}\}$ which implies  $d_{\Delta}(v_{a})> 3$. Add $c(\Delta)\leq \dfrac{\pi}{6}$ to $c(\widehat{\Delta})$ is given by Figure \ref{9}(iii). Notice that $d_{\widehat{\Delta}}(v_{g})=d_{\widehat{\Delta}}(v_{h})=2$ and all other vertices have degree at least 3. Therefore $c(\widehat{\Delta}) \leq c(2,2,3,3,3,3,3,3,4) = \dfrac{-\pi}{2}$.
\item Since $l_{\Delta}(v_{i})=idd^{-1}$ yields a contradiction i.e $i=1$ so $d_{\Delta}(v_{i})> 3$ and $l_{\Delta}(v_{a})=af^{-1}bcw$ where $w\in \{b^{-1},d^{-1},g^{-1},h^{-1}\}$ which implies  $d_{\Delta}(v_{a})> 3$. Since 
$ d_{\Delta}(v_{i}) > 3 $ and $ d_{\Delta}(v_{a}) > 3 $ so $ c(\Delta) \leq 0 $.
\item Since $l_{\Delta}(v_{i})=igd^{-1}$ so $l_{\Delta}(v_{a})=af^{-1}f$ yields a contradiction i.e., $a=1$ so $d_{\Delta}(v_{a})> 3$. Add $c(\Delta)\leq \dfrac{\pi}{6}$ to $c(\widehat{\Delta})$ is given by Figure \ref{9}(iv). Observe that  $d_{\Delta}(v_{b})=d_{\Delta}(v_{c})=2$. Notice that either $d_{\widehat{\Delta}}(v_{g})=3$ or $d_{\widehat{\Delta}}(v_{h})=2$ since $d_{\widehat{\Delta}}(v_{g})=3$ already present so $d_{\widehat{\Delta}}(v_{h})> 2$ otherwise contradiction occurs and all other vertices have degree at least 3. Therefore $c(\widehat{\Delta}) \leq c(2,2,3,3,3,3,3,3,4) = \dfrac{-\pi}{2}$.
\item Since $l_{\Delta}(v_{i})=ihd^{-1}$ so $l_{\Delta}(v_{a})=af^{-1}gw$ where $w\in \{b^{-1},c^{-1},d^{-1},h^{-1}\}$ which implies  $d_{\Delta}(v_{a})> 3$. Add $c(\Delta)\leq \dfrac{\pi}{6}$ to $c(\widehat{\Delta})$ is given by Figure \ref{9}(v). Notice that $d_{\widehat{\Delta}}(v_{b})=d_{\widehat{\Delta}}(v_{c})=2$ and all other vertices have degree at least 3. Therefore $c(\widehat{\Delta}) \leq c(2,2,3,3,3,3,3,3,4)= \dfrac{-\pi}{2}$.
\end{enumerate}
\end{proof}
\begin{lemma}\label{lem9}  The presentation $\mathcal{P}=\langle G,t \mid r \rangle$  is aspherical if 
$a=f,e=i$ and $h=b$.
\end{lemma}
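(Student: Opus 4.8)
The plan is to argue by curvature distribution, precisely as in Lemma~\ref{lem2}, of which the present hypotheses $a=f,\,e=i$ form the ungarnished special case. Under $a=f,\,e=i,\,h=b$ the admissible degree-two labels are forced to be
$$l_{\Delta}(v_{a})=af^{-1},\ l_{\Delta}(v_{f})=fa^{-1},\ l_{\Delta}(v_{e})=ei^{-1},\ l_{\Delta}(v_{i})=ie^{-1},\ l_{\Delta}(v_{h})=hb^{-1},\ l_{\Delta}(v_{b})=bh^{-1},$$
so that $v_{a},v_{b},v_{e},v_{f},v_{h},v_{i}$ are the only vertices that may have degree two, while $v_{c},v_{d},v_{g}$ have degree at least three. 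First I would record the local incompatibilities, exactly of the type used in Lemma~\ref{lem3} and the Remark: the pair $v_{a},v_{b}$ (joined by the $t$-edge following $a$) and the pair $v_{h},v_{i}$ (joined by the $t$-edge following $h$) cannot \emph{both} consist of degree-two vertices, whereas $v_{e},v_{f}$ and $v_{i},v_{a}$ may.

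Since a region carrying at most three degree-two vertices already satisfies $c(\Delta)\le c(2,2,2,3,3,3,3,3,3)=0$, a region of positive curvature must carry exactly four degree-two vertices, the other five having degree three, whence $c(\Delta)\le c(2,2,2,2,3,3,3,3,3)=\pi/3$. By the incompatibilities above, any four mutually compatible candidates necessarily comprise $v_{e},v_{f}$ together with one of $v_{h},v_{i}$ and one of $v_{a},v_{b}$. The choice $\{v_{e},v_{f},v_{i},v_{a}\}$ leaves $v_{b},v_{c},v_{d},v_{g},v_{h}$ of degree $\ge 3$ and so reproduces verbatim the configuration treated in Lemma~\ref{lem2}, to which that argument applies unchanged; it therefore remains only to treat the three configurations $\{v_{e},v_{f},v_{h},v_{a}\}$, $\{v_{e},v_{f},v_{h},v_{b}\}$ and $\{v_{e},v_{f},v_{i},v_{b}\}$ newly permitted by the relation $h=b$.

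For each of these I would run the scheme of Lemma~\ref{lem2}: read off $l_{\Delta}(v_{g})\in g\{a,a^{-1},f,f^{-1}\}b^{-1}$ at the degree-three vertex $v_{g}$, discard by torsion-freeness the degenerate middle letters ($f$ or $f^{-1}$, which force $a=1$ or $i=1$), and let the surviving middle letter pin down the label at the neighbouring degree-two vertex and hence a uniquely associated region $\widehat{\Delta}$ across the appropriate $t$-edge. In each surviving subcase one checks that $\widehat{\Delta}$ has two vertices of degree two and a vertex of degree $\ge 4$, so $c(\widehat{\Delta})\le c(2,2,3,3,3,3,3,3,4)=-\pi/2$, or else has the pattern $c(2,2,3,3,3,3,3,3,3)=-\pi/3$; adding $c(\Delta)\le \pi/3$ then yields $c(\Delta)+c(\widehat{\Delta})\le 0$. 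Since every positively curved region is thereby compensated by a uniquely assigned neighbour, the Gauss--Bonnet (Euler) formula forces the total curvature of any reduced spherical diagram to be $\le 0<4\pi$, a contradiction; hence $\mathcal{P}$ is aspherical and $s(t)=1$ is solvable.

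The main obstacle is the bookkeeping in this last step. Because $h=b$ now allows $v_{h}$ (or $v_{b}$) itself to be degree two, the vertex $v_{g}$ sits between \emph{two} degree-two vertices rather than one, so the label propagation around the $g$--$h$ portion of the relator no longer matches Lemma~\ref{lem2}; the associated region $\widehat{\Delta}$ must be relocated and its vertex degrees recomputed in each new subcase. The delicate point is to verify these degree counts while simultaneously ensuring that distinct positively curved regions are never assigned a common $\widehat{\Delta}$, so that no curvature is counted twice in the final summation.
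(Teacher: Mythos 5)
Your structural reduction coincides exactly with the paper's: under $a=f$, $e=i$, $h=b$ the only candidates for degree-two vertices are $v_{a},v_{b},v_{e},v_{f},v_{h},v_{i}$, neither the pair $\{v_{a},v_{b}\}$ nor the pair $\{v_{h},v_{i}\}$ can consist of two degree-two vertices, so a positively curved region must carry exactly four degree-two vertices, and the four resulting configurations are precisely the paper's cases. The gaps lie in what you assert in place of the case analysis. First, the configuration $\{v_{e},v_{f},v_{i},v_{a}\}$ is \emph{not} disposed of ``verbatim'' by Lemma~\ref{lem2}: all bounds on $c(\widehat{\Delta})$ in Lemma~\ref{lem2} were computed in a situation where $hb^{-1}$ is not an admissible label, whereas here $h=b$ makes $v_{h}$ and $v_{b}$ potential degree-two vertices of the compensating region as well, so every one of those estimates must be re-derived; the paper's treatment of this configuration indeed contains extra steps (weighing whether $d_{\widehat{\Delta}}(v_{h^{-1}})=2$ or $d_{\widehat{\Delta}}(v_{i^{-1}})=2$, whether $d_{\widehat{\Delta}}(v_{a})=2$ or $d_{\widehat{\Delta}}(v_{b})=2$) that have no counterpart in Lemma~\ref{lem2}.

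Second, your quantitative scheme --- $c(\Delta)\le \pi/3$ always compensated by $c(\widehat{\Delta})\le -\pi/2$ or $\le -\pi/3$ --- is not what the analysis yields, and as stated it does not close. In several subcases of the new configurations the compensating region has \emph{three} degree-two vertices and one can only prove $c(\widehat{\Delta})\le c(2,2,2,3,3,3,3,3,4)=-\pi/6$; this suffices only because in those same subcases a vertex of $\Delta$ is forced to have degree at least four, giving the sharper bound $c(\Delta)\le \pi/6$. Against your blanket bound $c(\Delta)\le \pi/3$, a neighbour worth only $-\pi/6$ leaves the sum positive. Other subcases force two vertices of $\Delta$ to degree greater than three (via contradictions such as $i=1$ or $g^{2}=1$), giving $c(\Delta)\le 0$ with no transfer needed at all. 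Moreover, the pivot is not uniformly $v_{g}$ with middle letter in $\{a,a^{-1},f,f^{-1}\}$: in the configuration $\{v_{b},v_{e},v_{f},v_{h}\}$ the relevant degree-three vertices are $v_{c}$ and $v_{d}$, whose middle letters range over the \emph{five} elements $\{b^{-1},c^{-1},d^{-1},g^{-1},h^{-1}\}$, while in $\{v_{a},v_{e},v_{f},v_{h}\}$ the pivot is $v_{b}$. So the ``bookkeeping'' you defer to your final paragraph is the entire mathematical content of the lemma; the plan is the right plan, but the specific claims you substitute for that bookkeeping are false as stated, and the argument does not close without carrying out the degree and curvature computations configuration by configuration, as the paper does.
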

\begin{proof}
There are the following four cases to examine:
\begin{enumerate}
\item $d_{\Delta}(v_{i})=d_{\Delta}(v_{a})=d_{\Delta}(v_{e})=d_{\Delta}(v_{f})=2$;
\item $d_{\Delta}(v_{b})=d_{\Delta}(v_{e})=d_{\Delta}(v_{f})=d_{\Delta}(v_{h})=2$;
\item $d_{\Delta}(v_{b})=d_{\Delta}(v_{e})=d_{\Delta}(v_{f})=d_{\Delta}(v_{i})=2$;
\item $d_{\Delta}(v_{a})=d_{\Delta}(v_{e})=d_{\Delta}(v_{f})=d_{\Delta}(v_{h})=2$.
\end{enumerate}
\begin{figure}[H]
\centering
        \includegraphics[width=12cm]{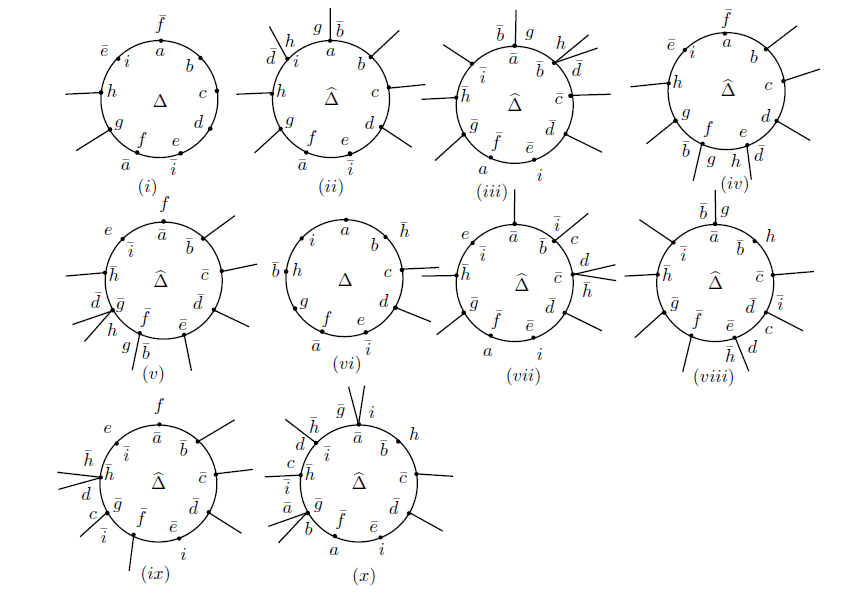}
        \caption{Regions $\Delta$ and $\widehat{\Delta}$}
    \label{10}
\end{figure}

\begin{enumerate}
\item Here $l_{\Delta}(v_{a})=af^{-1}$, $l_{\Delta}(v_{e})=ei^{-1}$, $l_{\Delta}(v_{f})=fa^{-1}$, $l_{\Delta}(v_{i})=ie^{-1}$, as given in Figure \ref{10}(i). To obtain positive curvature the degree of the remaining vertices must be at least three. Suppose $d_{\Delta}(v_{g})=3$ and $d_{\Delta}(v_{h})=3$ or $d_{\Delta}(v_{h})> 3$. Thus $l_{\Delta}(v_{g})=g\{a,a^{-1},f,f^{-1}\}b^{-1}$ which implies $l_{\Delta}(v_{h})=hd^{-1}\{i,b^{-1}, e, g^{-1}\}$. There are the following four cases to examine:
\begin{enumerate}
\item $l_{\Delta}(v_{g})=gab^{-1}$, $l_{\Delta}(v_{h})=hd^{-1}i$;
\item $l_{\Delta}(v_{g})=ga^{-1}b^{-1}$, $l_{\Delta}(v_{h})=hd^{-1}b^{-1}$;
\item $l_{\Delta}(v_{g})=gfb^{-1}$, $l_{\Delta}(v_{h})=hd^{-1}e$;
\item $l_{\Delta}(v_{g})=gf^{-1}b^{-1}$, $l_{\Delta}(v_{h})=hd^{-1}g^{-1}$.
\end{enumerate}
\begin{enumerate}
\item Since $l_{\Delta}(v_{g})=gab^{-1}$ so $l_{\Delta}(v_{h})=hd^{-1}i$. Add $c(\Delta)\leq \dfrac{\pi}{3}$ to $c(\widehat{\Delta})$ is given by Figure \ref{10}(ii). Notice that $d_{\widehat{\Delta}}(v_{e})=d_{\widehat{\Delta}}(v_{f})=2$ and all other vertices have degree at least 3. Therefore $c(\widehat{\Delta}) \leq c(2,2,3,3,3,3,3,3,3)= \dfrac{-\pi}{3}$.
\item Since $l_{\Delta}(v_{g})=ga^{-1}b^{-1}$ so $l_{\Delta}(v_{h})=hd^{-1}b^{-1}w$ where $w \in \{c,d,g,h\}$ which implies $d_{\Delta}(v_{h}) > 3$. Add $c(\Delta)\leq \dfrac{\pi}{6}$ to $c(\widehat{\Delta})$ is given by Figure \ref{10}(iii). Notice that, either $ d_{\widehat{\Delta}}(v_{i^{-1}})=2 $ or $ d_{\widehat{\Delta}}(v_{h^{-1}})=2 $ otherwise contradiction occurs. Observe that either $ d_{\widehat{\Delta}}(v_{a^{-1}})=3 $ or $ d_{\widehat{\Delta}}(v_{i^{-1}})=2$ since  $ d_{\widehat{\Delta}}(v_{a^{-1}})=3 $ already present so $ d_{\widehat{\Delta}}(v_{i^{-1}})> 2$ otherwise contradiction occur and all other vertices have  degree at least 3. Therefore $c(\widehat{\Delta}) \leq c(2,2,3,3,3,3,3,3,4) = \dfrac{-\pi}{2}$.
\item Since $l_{\Delta}(v_{g})=gfb^{-1}$ so $l_{\Delta}(v_{h})=hd^{-1}e$. Add $c(\Delta)\leq \dfrac{\pi}{3}$ to $c(\widehat{\Delta})$ is given by Figure \ref{10}(iv). Notice that either $ d_{\widehat{\Delta}}(v_{h})=2 $ or $ d_{\widehat{\Delta}}(v_{i})=2 $ otherwise contradiction occur. Similarly notice that either $ d_{\widehat{\Delta}}(v_{a})=2 $ or $ d_{\widehat{\Delta}}(v_{b})=2 $ otherwise contradiction occur, so we have $d_{\widehat{\Delta}}(v_{h})=d_{\widehat{\Delta}}(v_{a})=2$ and all other vertices have degree at least 3. Therefore  $c(\widehat{\Delta}) \leq c(2,2,3,3,3,3,3,3,3)= \dfrac{-\pi}{3}$.
\item Since $l_{\Delta}(v_{g})=gf^{-1}b^{-1}$ so $l_{\Delta}(v_{h})=hd^{-1}g^{-1}w$ where $w \in \{b,c,d,h\}$ which implies $d_{\Delta}(v_{h}) > 3$. Add $c(\Delta)\leq \dfrac{\pi}{6}$ to $c(\widehat{\Delta})$ is given by Figure \ref{10}(v). Notice that, either $ d_{\widehat{\Delta}}(v_{b^{-1}})=2 $ or $ d_{\widehat{\Delta}}(v_{a^{-1}})=2 $ otherwise contradiction occurs. Observe that either $ d_{\widehat{\Delta}}(v_{f^{-1}})=3 $ or $ d_{\widehat{\Delta}}(v_{e^{-1}})=2 $ since  $ d_{\widehat{\Delta}}(v_{f^{-1}})=3 $ already present so $ d_{\widehat{\Delta}}(v_{e^{-1}})> 2$ otherwise contradiction occur and all other vertices have degree at least 3. Therefore $c(\widehat{\Delta}) \leq c(2,2,3,3,3,3,3,3,4) = \dfrac{-\pi}{2}$.
\end{enumerate}
\item Here $l_{\Delta}(v_{b})=bh^{-1}$, $l_{\Delta}(v_{e})=ei^{-1}$,
 $l_{\Delta}(v_{f})=fa^{-1}$, $l_{\Delta}(v_{h})=hb^{-1}$ as given in Figure \ref{10}(vi). To obtain positive curvature the degree of the remaining vertices must be at least three. Suppose $d_{\Delta}(v_{c})=3$ and $d_{\Delta}(v_{d})=3$ or $d_{\Delta}(v_{d})>3$. Thus $l_{\Delta}(v_{c})=c\{b^{-1},c^{-1},d^{-1},g^{-1},h^{-1}\}i^{-1}$ which implies $l_{\Delta}(v_{d})=dh^{-1}\{c^{-1},d^{-1},e^{-1},h^{-1},i^{-1}\}$. There are the following five cases to examine:
\begin{enumerate}
\item $l_{\Delta}(v_{c})=cb^{-1}i^{-1}$, $l_{\Delta}(v_{d})=dh^{-1}c^{-1}$;
\item $l_{\Delta}(v_{c})=cc^{-1}i^{-1}$, $l_{\Delta}(v_{d})=dh^{-1}d^{-1}$;
\item $l_{\Delta}(v_{c})=cd^{-1}i^{-1}$, $l_{\Delta}(v_{d})=dh^{-1}e^{-1}$;
\item $l_{\Delta}(v_{c})=cg^{-1}i^{-1}$, $l_{\Delta}(v_{d})=dh^{-1}h^{-1}$;
\item $l_{\Delta}(v_{c})=ch^{-1}i^{-1}$, $l_{\Delta}(v_{d})=dh^{-1}i^{-1}$.
\end{enumerate}
\begin{enumerate}
\item  Since $l_{\Delta}(v_{c})=cb^{-1}i^{-1}$ so $l_{\Delta}(v_{d})=dh^{-1}c^{-1}w$ where $w \in \{b,d,g,h\}$ which implies $d_{\Delta}(v_{d}) > 3$. Add $c(\Delta)\leq \dfrac{\pi}{6}$ to $c(\widehat{\Delta})$ is given by Figure \ref{10}(vii). Observe that either $ d_{\widehat{\Delta}}(v_{h^{-1}})=2 $ or $ d_{\widehat{\Delta}}(v_{i^{-1}})=2 $ otherwise contradiction occurs. Notice that either $ d_{\widehat{\Delta}}(v_{b^{-1}})=3 $ or $ d_{\widehat{\Delta}}(v_{a^{-1}})=2 $ otherwise contradiction occur, since $ d_{\widehat{\Delta}}(v_{b^{-1}})=3 $ already present so $ d_{\widehat{\Delta}}(v_{a^{-1}})> 2 $. So, we have $d_{\widehat{\Delta}}(v_{i^{-1}})=d_{\widehat{\Delta}}(v_{e^{-1}})=d_{\widehat{\Delta}}(v_{f^{-1}})=2$ and all other vertices have at least degree 3. Therefore  $c(\widehat{\Delta}) \leq c(2,2,2,3,3,3,3,3,4) = \dfrac{-\pi}{6}$.
\item  Since $l_{\Delta}(v_{c})=cc^{-1}i^{-1}$ yields a contradiction i.e $i=1$ so $d_{\Delta}(v_{c}) > 3$ and $l_{\Delta}(v_{d})=dh^{-1}d^{-1}w$ where $w \in \{b,c,g,h\}$ which implies $d_{\Delta}(v_{d}) > 3$. Since 
$ d_{\Delta}(v_{c}) > 3 $ and $ d_{\Delta}(v_{d}) > 3 $ so $ c(\Delta) \leq 0 $. 
\item Since $l_{\Delta}(v_{c})=cd^{-1}i^{-1}$ so $l_{\Delta}(v_{d})=dh^{-1}e^{-1}$. Add $c(\Delta)\leq \dfrac{\pi}{3}$ to $c(\widehat{\Delta})$ is given by Figure \ref{10}(viii). Observe that either $ d_{\widehat{\Delta}}(v_{b^{-1}})=2 $ or $ d_{\widehat{\Delta}}(v_{a^{-1}})=2 $ otherwise contradiction occurs. Notice that either $ d_{\widehat{\Delta}}(v_{i^{-1}})=2 $ or $ d_{\widehat{\Delta}}(v_{h^{-1}})=2 $ otherwise contradiction occurs. Similarly notice that either $ d_{\widehat{\Delta}}(v_{e^{-1}})=3$ or $ d_{\widehat{\Delta}}( v_{f^{-1}})=2$ since $ d_{\widehat{\Delta}}(v_{e^{-1}})=3 $ already present so $ d_{\widehat{\Delta}}(v_{f^{-1}})> 2$ otherwise contradiction occurs and all other vertices have degree at least 3. Therefore  $c(\widehat{\Delta})  \leq c(2,2,2,3,3,3,3,4,4) = \dfrac{-\pi}{3}$.
\item Since $l_{\Delta}(v_{c})=cg^{-1}i^{-1}$ so $l_{\Delta}(v_{d})=dh^{-1}h^{-1}w$ where $w \in \{b,c,d,g\}$ which implies $d_{\Delta}(v_{d}) > 3$. Add $c(\Delta)\leq \dfrac{\pi}{6}$ to $c(\widehat{\Delta})$ is given by Figure \ref{10}(ix). Observe that either $ d_{\widehat{\Delta}}(v_{b^{-1}})=2 $ or $ d_{\widehat{\Delta}}(v_{a^{-1}})=2 $ otherwise contradiction occurs. Notice that either $ d_{\widehat{\Delta}}(v_{g^{-1}})=2 $ or $ d_{\widehat{\Delta}}(v_{f^{-1}})=3 $,since $ d_{\widehat{\Delta}}(v_{g^{-1}})=3 $ already present so $ d_{\widehat{\Delta}}(v_{f^{-1}}) > 2 $ otherwise contradiction occur. Similarly notice that $d_{\widehat{\Delta}}(v_{a^{-1}})=d_{\widehat{\Delta}}(v_{i^{-1}})=d_{\widehat{\Delta}}(v_{e^{-1}})=2$ and all other vertices have at least degree 3. Therefore $c(\widehat{\Delta}) \leq c(2,2,2,3,3,3,3,3,4) = \dfrac{-\pi}{6}$.
\item Since $l_{\Delta}(v_{c})=ch^{-1}i^{-1}$ so $l_{\Delta}(v_{d})=dh^{-1}i^{-1}$. Add $c(\Delta)\leq \dfrac{\pi}{3}$ to $c(\widehat{\Delta})$ is given by Figure \ref{10}(x). Observe that either $ d_{\widehat{\Delta}}(v_{a^{-1}})=2 $ or $ d_{\widehat{\Delta}}(v_{b^{-1}})=2 $ otherwise contradiction occur. Notice that $ d_{\widehat{\Delta}}(v_{i^{-1}})=3 $ or $ d_{\widehat{\Delta}}(v_{a^{-1}})=2 $ since  $ d_{\widehat{\Delta}}(v_{i^{-1}})=3 $ already present so $ d_{\widehat{\Delta}}(v_{a^{-1}})> 2$ otherwise contradiction occurs. Similarly notice that $l_{\widehat{\Delta}}(v_{h^{-1}})= h^{-1}i^{-1}c$ and $l_{\widehat{\Delta}}(v_{f^{-1}})= f^{-1}a$ implies that $l_{\widehat{\Delta}}(v_{g^{-1}})= g^{-1}ba^{-1}$ yields a contradiction i.e. $a^{2}=1$ which implies $d_{\widehat{\Delta}}(v_{g^{-1}})>3$. Observe that $l_{\widehat{\Delta}}(v_{i^{-1}})= i^{-1}dh^{-1}$ and $l_{\widehat{\Delta}}(v_{b^{-1}})= b^{-1}h$ implies that $l_{\widehat{\Delta}}(v_{a^{-1}})= a^{-1}g^{-1}iw$ where $w \in \{b,c,d,h\}$ which implies $d_{\widehat{\Delta}}(v_{a^{-1}})>3$. So, we have $d_{\widehat{\Delta}}(v_{b^{-1}})=d_{\widehat{\Delta}}(v_{e^{-1}})=d_{\widehat{\Delta}}(v_{f^{-1}})=2$  and all other vertices have at least degree 3. Therefore $c(\widehat{\Delta}) \leq c(2,2,2,3,3,3,3,3,4) = \dfrac{-\pi}{3}$.
\end{enumerate}
\item Here $l_{\Delta}(v_{b})=bh^{-1}$, $l_{\Delta}(v_{e})=ei^{-1}$,
 $l_{\Delta}(v_{f})=fa^{-1}$, $l_{\Delta}(v_{i})=ie^{-1}$ as given in Figure \ref{11}(i). 
 \begin{figure}[H]
\centering
        \includegraphics[width=12cm]{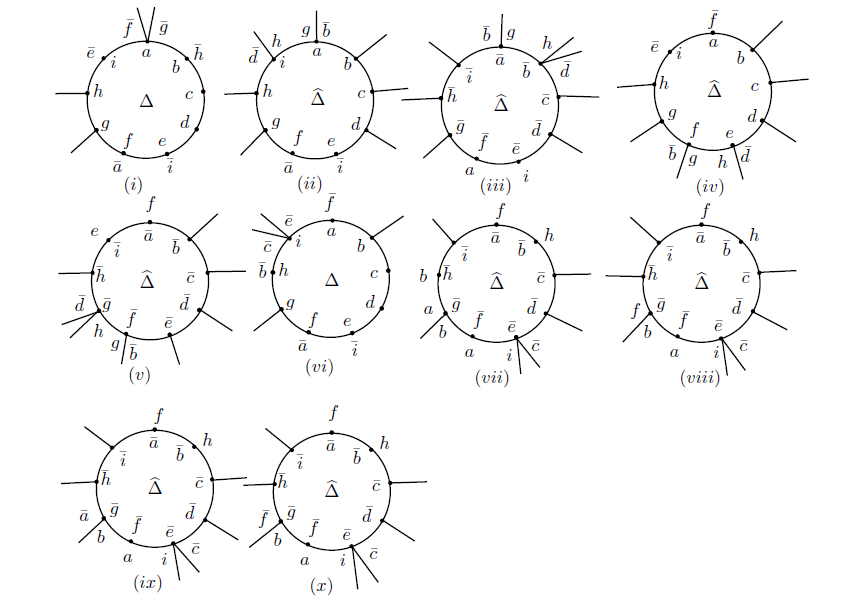}
        \caption{Regions $\Delta$ and $\widehat{\Delta}$}
    \label{11}
\end{figure}
To obtain positive curvature the degree of the remaining vertices must be at least three. Suppose $d_{\Delta}(v_{g})=3$ and $d_{\Delta}(v_{h})=3$ or $d_{\Delta}(v_{h})> 3$. Thus $l_{\Delta}(v_{g})=g\{a,a^{-1},f,f^{-1}\}b^{-1}$ which implies $l_{\Delta}(v_{h})=hd^{-1}\{i,b^{-1},e,g^{-1}\}$. Notice that $l_{\Delta}(v_{i})=ie^{-1}$ and $l_{\Delta}(v_{b})=bh^{-1}$ implies that $l_{\Delta}(v_{a})=ag^{-1}f^{-1}w$ where $w \in \{b,c,d,h\}$ which implies $d_{\Delta}(v_{a}) > 3$. There are the following four cases to examine:
 \begin{enumerate}
\item $l_{\Delta}(v_{h})=hd^{-1}i$;
\item $l_{\Delta}(v_{h})=hd^{-1}b^{-1}$;
\item $l_{\Delta}(v_{h})=hd^{-1}e$;
\item $l_{\Delta}(v_{h})=hd^{-1}g^{-1}$.
\end{enumerate}
\begin{enumerate}
\item  Since $d_{\Delta}(v_{h})=3$ and $d_{\Delta}(v_{a})> 3$. Add $c(\Delta)\leq \dfrac{\pi}{6}$ to $c(\widehat{\Delta})$ is given by Figure \ref{11}(ii). Notice that either $ d_{\widehat{\Delta}}(v_{h})=2 $ or $ d_{\widehat{\Delta}}(v_{i})=2 $ otherwise contradiction occurs. Observe that either $ d_{\widehat{\Delta}}(v_{a})=2 $ or $ d_{\widehat{\Delta}}(v_{b})=2 $ otherwise contradiction occur and all other vertices have at least degree 3. Therefore  $c(\widehat{\Delta}) \leq c(2,2,2,3,3,3,3,3,4) = \dfrac{-\pi}{6}$.
\item Since $l_{\Delta}(v_{h})=hd^{-1}b^{-1}w$ where $w \in \{c,d,g,h\}$ which implies $d_{\Delta}(v_{h}) > 3$. Notice that $l_{\Delta}(v_{i})=ie^{-1}$ and $l_{\Delta}(v_{b})=bh^{-1}$ as shown in Figure \ref{11}(iii) which implies that $l_{\Delta}(v_{a})=aa^{-1}f^{-1}$ yields a contradiction i.e., $f=1$ so $d_{\Delta}(v_{a})> 3$. Since $d_{\Delta}(v_{a})> 3$ and $d_{\Delta}(v_{h})> 3$ so $c(\Delta)\leq 0$.
\item Since $l_{\Delta}(v_{h})=hd^{-1}e$ and $d_{\Delta}(v_{a})> 3$. Add $c(\Delta)\leq \dfrac{\pi}{6}$ to $c(\widehat{\Delta})$ is given by Figure \ref{11}(iv). Notice that either $ d_{\widehat{\Delta}}(v_{h})=2 $ or $ d_{\widehat{\Delta}}(v_{i})=2 $ otherwise contradiction occurs. Observe that either $ d_{\widehat{\Delta}}(v_{a})=2 $ or $ d_{\widehat{\Delta}}(v_{b})=2 $ otherwise contradiction occurs and all other vertices have at least degree 3. Therefore  $c(\widehat{\Delta}) \leq c(2,2,2,3,3,3,3,3,4) = \dfrac{-\pi}{6}$.
\item Since $l_{\Delta}(v_{h})=hd^{-1}g^{-1}w$ where $w \in \{b,c,d,h\}$ which implies $d_{\Delta}(v_{h}) > 3$.  Notice that $l_{\Delta}(v_{i})=ie^{-1}$ and $l_{\Delta}(v_{b})=bh^{-1}$ as shown in Figure \ref{11}(v) which implies that $l_{\Delta}(v_{a})=aa^{-1}f^{-1}$ which yields a contradiction i.e., $f=1$ so $d_{\Delta}(v_{a})> 3$. Since $d_{\Delta}(v_{a})> 3$ and $d_{\Delta}(v_{h})> 3$ so $c(\Delta)\leq 0$.
\end{enumerate}
\item Here $l_{\Delta}(v_{a})=af^{-1}$, $l_{\Delta}(v_{e})=ei^{-1}$,
 $l_{\Delta}(v_{f})=fa^{-1}$, $l_{\Delta}(v_{h})=hb^{-1}$ as given in Figure \ref{11}(vi). To obtain positive curvature the degree of the remaining vertices must be at least three. Suppose $d_{\Delta}(v_{a})=2$ and $d_{\Delta}(v_{b})=3$ or $d_{\Delta}(v_{b})> 3$. Thus $l_{\Delta}(v_{b})=b\{a,a^{-1},f,f^{-1}\}g^{-1}$. Notice that $l_{\Delta}(v_{a})=af^{-1}$ and $l_{\Delta}(v_{b})=bh^{-1}$ implies that $l_{\Delta}(v_{i})=ic^{-1}e^{-1}w$ where $w\in \{b,d,g,h\} $ which implies $d_{\Delta}(v_{i})> 3$ but  $l_{\Delta}(v_{g})=ga^{-1}b^{-1}$ so there are the following four cases to examine:
\begin{enumerate}
\item $l_{\Delta}(v_{b})=bag^{-1}a$;
\item $l_{\Delta}(v_{b})=ba^{-1}g^{-1}$;
\item $l_{\Delta}(v_{b})=bfg^{-1}$;
\item $l_{\Delta}(v_{b})=bf^{-1}g^{-1}$.
\end{enumerate}
\begin{enumerate}
\item  Since $l_{\Delta}(v_{b})=bag^{-1}$. So $d_{\Delta}(v_{b})=3$ and $d_{\Delta}(v_{i})> 3$. Add $c(\Delta)\leq \dfrac{\pi}{6}$ to $c(\widehat{\Delta})$ is given by Figure \ref{11}(vii). Notice that either $ d_{\widehat{\Delta}}(v_{h^{-1}})=2 $ or $ d_{\widehat{\Delta}}(v_{i^{-1}})=2 $ otherwise contradiction occurs. Observe that either $ d_{\widehat{\Delta}}(v_{a^{-1}})=2 $ or $ d_{\widehat{\Delta}}(v_{b^{-1}})=2 $ otherwise contradiction occurs and all other vertices have at least degree 3. Therefore  $c(\widehat{\Delta}) \leq c(2,2,2,3,3,3,3,3,4) = \dfrac{-\pi}{6}$.
\item Since $l_{\Delta}(v_{b})=ba^{-1}g^{-1}$   but $l_{\Delta}(v_{g})=ga^{-1}b^{-1}$ yields a contradiction i.e., $g^{2}=1$. Notice that $l_{\Delta}(v_{a})=af^{-1}$ and $l_{\Delta}(v_{h})=hb^{-1}$ as shown in Figure \ref{11}(viii) which implies that $l_{\Delta}(v_{i})=ie^{-1}c^{-1}w$ where $w\in \{b,d,g,h\} $ implies that $d_{\Delta}(v_{i})> 3$. Since $d_{\Delta}(v_{b})> 3$ and $d_{\Delta}(v_{i})> 3$ so $c(\Delta)\leq 0$.
\item Since $l_{\Delta}(v_{h})=bfg^{-1}$ and $d_{\Delta}(v_{i})> 3$. Add $c(\Delta)\leq \dfrac{\pi}{6}$ to $c(\widehat{\Delta})$ is given by Figure \ref{11}(ix). Notice that either $ d_{\widehat{\Delta}}(v_{a^{-1}})=2 $ or $ d_{\widehat{\Delta}}(v_{b^{-1}})=2 $ otherwise contradiction occurs and all other vertices have degree at least 3. Therefore  $c(\widehat{\Delta}) \leq c(2,2,2,3,3,3,3,3,4) = \dfrac{-\pi}{6}$.
\item Since $l_{\Delta}(v_{b})=bf^{-1}g^{-1}$   but $l_{\Delta}(v_{g})=ga^{-1}b^{-1}$ yields a contradiction i.e $g^{2}=1$ which implies $d_{\Delta}(v_{i})> 3$. Notice that $l_{\Delta}(v_{a})=af^{-1}$ and $l_{\Delta}(v_{h})=hb^{-1}$ as shown in Figure \ref{11}(x) implies that $l_{\Delta}(v_{i})=ie^{-1}c^{-1}w$ where $w\in \{b,d,g,h\} $ which implies $d_{\Delta}(v_{i})> 3$. Since $d_{\Delta}(v_{b})> 3$ and $d_{\Delta}(v_{i})> 3$ so $c(\Delta)\leq 0$.
\end{enumerate}
\end{enumerate}
\end{proof}

\begin{lemma}\label{lem10}  The presentation $\mathcal{P}=\langle G,t~|~s(t)\rangle$  is aspherical if
 $a=f,h=b$ and $d=c$.
\end{lemma}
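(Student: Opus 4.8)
The plan is to prove Lemma~\ref{lem10} by curvature distribution, following the pattern of Lemmas~\ref{lem3} and~\ref{lem6}. The hypotheses $a=f$, $h=b$, $d=c$ make precisely the three cycles $af^{-1}$, $hb^{-1}$, $dc^{-1}$ admissible, so the only vertices of a region $\Delta$ that can have degree $2$ are $v_{a},v_{f}$ (labels $af^{-1},fa^{-1}$), $v_{b},v_{h}$ (labels $bh^{-1},hb^{-1}$) and $v_{c},v_{d}$ (labels $cd^{-1},dc^{-1}$). Since $c(2,2,2,3,3,3,3,3,3)=0$, a region can carry positive curvature only if at least four of its vertices have degree $2$, so I would organise the argument around the possible four-element sets of degree-$2$ vertices chosen from $\{v_{a},v_{f},v_{b},v_{h},v_{c},v_{d}\}$.

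First I would record the coexistence restrictions. As in the proof of Lemma~\ref{lem3}, $d_{\Delta}(v_{a})=d_{\Delta}(v_{b})=2$ cannot hold together, because $a,b$ are consecutive corners and the labels $af^{-1},bh^{-1}$ are then incompatible across the intervening $t$-edge. Likewise I expect $d_{\Delta}(v_{c})=d_{\Delta}(v_{d})=2$ to be impossible: the corners $c,d$ are consecutive and the labels $cd^{-1}$ and $dc^{-1}$ are mutually inverse, which would force a cancelling pair of regions and contradict the reducedness of $K$. Because $v_{f}$ and $v_{h}$ are subject to no such restriction, attaining four degree-$2$ vertices forces both $v_{f}$ and $v_{h}$ to have degree $2$ together with one admissible pair from the chain $v_{a}-v_{b}-v_{c}-v_{d}$, leaving the cases $\{v_{f},v_{h},v_{a},v_{c}\}$, $\{v_{f},v_{h},v_{a},v_{d}\}$, $\{v_{f},v_{h},v_{b},v_{d}\}$ (and $\{v_{f},v_{h},v_{b},v_{c}\}$ if it survives the restrictions), some of which coincide under the equivalence $a\leftrightarrow e^{-1}$, $b\leftrightarrow d^{-1}$, $c\leftrightarrow c^{-1}$, $f\leftrightarrow i^{-1}$, $g\leftrightarrow h^{-1}$.

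In each surviving case I would read off the forced labels of the four degree-$2$ vertices and propagate them through the star graph $\Gamma$ to the two adjacent vertices, exactly as in Lemmas~\ref{lem5} and~\ref{lem6}. The goal is to show that at least two of the five remaining vertices are forced to have degree greater than $3$; this closes the case directly, since then $c(\Delta)\le c(2,2,2,2,4,4,3,3,3)=0$ and no compensation is needed. The torsion-freeness of $G$ is what forces these degrees up, by excluding the degenerate readings (such as $f=1$, $i=1$ or $a^{2}=1$) that would otherwise permit a neighbouring vertex of degree exactly $3$. In any residual subcase where only one remaining vertex is pushed above degree $3$, so that $c(\Delta)\le c(2,2,2,2,4,3,3,3,3)=\frac{\pi}{6}$, I would instead add $c(\Delta)$ to a uniquely associated neighbouring region $\widehat{\Delta}$ and check $c(\Delta)+c(\widehat{\Delta})\le 0$ from the degree sequence of $\widehat{\Delta}$, precisely as in the proof of Lemma~\ref{lem2}.

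The main obstacle is the bookkeeping in this last step. Verifying the coexistence restrictions rigorously, and then, in each subcase, tracking which vertices of $\Delta$ and of $\widehat{\Delta}$ are forced up in degree, requires the careful analysis of the form ``either $d_{\widehat{\Delta}}(v_{x})=2$ or $d_{\widehat{\Delta}}(v_{y})=2$, otherwise a contradiction occurs'' that recurs throughout Lemma~\ref{lem9}; each such step depends on the precise edge-structure of $\Gamma$ and on $G$ being torsion free. I expect most cases to close by the direct degree count and only a few to need the $\widehat{\Delta}$ argument, and I would flag any configuration resisting both as a candidate exceptional case of the kind anticipated in the statement of the main result.
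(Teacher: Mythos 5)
Your plan is the same route as the paper's proof: identify $v_{a},v_{f},v_{b},v_{h},v_{c},v_{d}$ as the only possible degree-two vertices, observe that positive curvature forces at least four of them to have degree two (hence $v_{f},v_{h}$ together with a non-adjacent pair from $v_{a},v_{b},v_{c},v_{d}$), and close each configuration either by forcing two remaining vertices above degree three, so that $c(\Delta)\le c(2,2,2,2,3,3,3,4,4)=0$, or by transferring $c(\Delta)\le \pi/6$ to a uniquely associated neighbour $\widehat{\Delta}$ with $c(\widehat{\Delta})\le -\pi/6$. Your enumeration of configurations is in fact more scrupulous than the paper's, which treats only $\{v_{a},v_{c},v_{f},v_{h}\}$ and $\{v_{b},v_{d},v_{f},v_{h}\}$, whereas you also list $\{v_{a},v_{d},v_{f},v_{h}\}$. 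One correction, though: the equivalence $a\leftrightarrow e^{-1}$, $b\leftrightarrow d^{-1}$, $c\leftrightarrow c^{-1}$, $f\leftrightarrow i^{-1}$, $g\leftrightarrow h^{-1}$ cannot merge any of your cases, because it transforms the hypotheses $a=f$, $h=b$, $d=c$ into $e=i$, $g=d$, $b=c$; it is not a symmetry of this lemma, so all surviving configurations must be handled separately.

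The genuine gap is that every step carrying mathematical weight is announced rather than performed, so nothing is actually proved. You never propagate the forced degree-two labels through the star graph, which is the entire content of the paper's argument: in the configuration $\{v_{a},v_{c},v_{f},v_{h}\}$ one deduces $l_{\Delta}(v_{b})=bc^{-1}g^{-1}w$ and $l_{\Delta}(v_{i})=ie^{-1}c^{-1}w$, forcing $d_{\Delta}(v_{b})>3$ and $d_{\Delta}(v_{i})>3$ and hence $c(\Delta)\le 0$; in the configuration $\{v_{b},v_{d},v_{f},v_{h}\}$ one deduces only $d_{\Delta}(v_{e})>3$, so $c(\Delta)\le \pi/6$, and one must then exhibit the specific neighbour $\widehat{\Delta}$ (Figure \ref{12}(iv)) and verify its degree sequence is at least $(2,2,2,3,3,3,3,3,4)$, giving $c(\widehat{\Delta})\le -\pi/6$. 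Without these computations the case analysis is an empty shell, and your third configuration $\{v_{a},v_{d},v_{f},v_{h}\}$ would need the same treatment from scratch. Moreover, your closing sentence, that you would ``flag any configuration resisting both as a candidate exceptional case,'' concedes the conclusion: the lemma asserts unconditionally that $a=f$, $h=b$, $d=c$ is \emph{not} an exceptional case, so a proof must rule out every positively curved configuration, not reserve the option of reclassifying the statement if the bookkeeping turns out badly.
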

\begin{proof} As can be seen in Figure \ref{12}(i),
there are the following two cases to examine:
\begin{enumerate}
\item $d_{\Delta}(v_{a})=d_{\Delta}(v_{c})=d_{\Delta}(v_{f})=d_{\Delta}(v_{h})=2$;
\item $d_{\Delta}(v_{b})=d_{\Delta}(v_{d})=d_{\Delta}(v_{f})=d_{\Delta}(v_{h})=2$.
\end{enumerate}
\begin{figure}[H]
\centering
        \includegraphics[width=12cm]{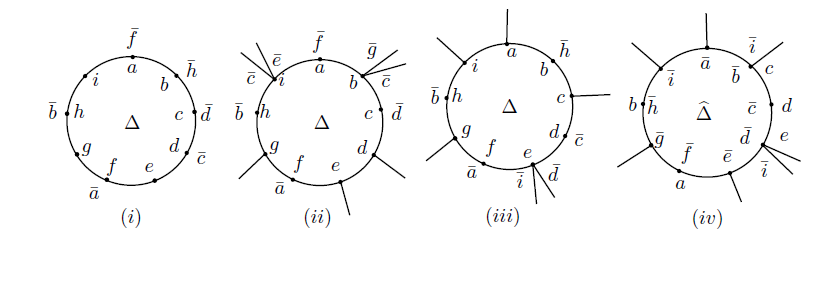}
        \caption{Regions $\Delta$ and $\widehat{\Delta}$}
    \label{12}
\end{figure}
\begin{enumerate}
\item Here $l_{\Delta}(v_{a})=af^{-1}$, $l_{\Delta}(v_{c})=cd^{-1}$, $l_{\Delta}(v_{f})=fa^{-1}$, $l_{\Delta}(v_{h})=hb^{-1}$, as given in Figure \ref{12}(ii). Notice that $l_{\Delta}(v_{a})=af^{-1}$ and $l_{\Delta}(v_{c})=cd^{-1}$ implies that $l_{\Delta}(v_{b})=bc^{-1}g^{-1}w$ where $w \in \{b,c,d,h\}$ which implies $ d_{\Delta}(v_{b}) > 3 $. Observe that $ l_{\Delta}(v_{a})=af^{-1} $ and $ l_{\Delta}(v_{h})=hb^{-1} $ implies that $ l_{\Delta}(v_{i})=ie^{-1}c^{-1}w $ where $ w \in \{b,d,g,h\} $ which implies $ d_{\Delta}(v_{i}) > 3 $. Since 
$ d_{\Delta}(v_{b}) > 3 $ and $ d_{\Delta}(v_{i}) > 3 $ so $ c(\Delta) \leq 0 $. 
\item Here $l_{\Delta}(v_{b})=bh^{-1}$, $l_{\Delta}(v_{d})=dc^{-1}$, $l_{\Delta}(v_{f})=fa^{-1}$, $l_{\Delta}(v_{h})=hb^{-1}$, as given in Figure \ref{12}(iii). Notice that  $l_{\Delta}(v_{a})=af^{-1}$ and $l_{\Delta}(v_{d})=dc^{-1}$ implies that $l_{\Delta}(v_{e})=ei^{-1}d^{-1}w$ where $w \in \{b,c,g,h\}$ which implies $ d_{\Delta}(v_{e}) > 3 $. Add $c(\Delta)\leq \dfrac{\pi}{6}$ to $c(\widehat{\Delta})$ is given by Figure \ref{12}(iv). Observe that $d_{\widehat{\Delta}}(v_{f^{-1}})=d_{\widehat{\Delta}}(v_{h^{-1}})=d_{\widehat{\Delta}}(v_{c^{-1}})=2$ and all other vertices have degree at least 3. Therefore $c(\widehat{\Delta}) \leq c(2,2,2,3,3,3,3,3,4) = \dfrac{-\pi}{6}$.
\end{enumerate}
\end{proof}
\begin{lemma}\label{lem11} The presentation $\mathcal{P}=\langle G,t~|~s(t)\rangle$  is aspherical if any one of the following holds:
\begin{enumerate}
\item $a=f,e=i$ and $h=g$;
\item $a=f,e=i$ and $d=b$;
\item $a=f,h=d$ and $c=b$;
\item $a=f,g=b$ and $d=c$;
\item $a=f,e=i,h=b$ and $g=d$;
\item $a=f,e=i,h=g$ and $d=b$;
\item $a=f,h=b,h=c$ and $c=b$;
\item $a=f,h=b,h=d$ and $d=b$;
\item $a=f,h=b,h=g$ and $g=b$;
\item $a=f,h=c,h=d$ and $d=c$;
\item $a=f,h=c,h=g$ and $g=c$;
\item $a=f,d=c,d=b$ and $c=b$;
\item $a=f,g=d,h=g$ and $h=d$;
\item $a=f,g=c,g=b$ and $c=b$;
\item $a=f,h=b,h=c,c=b$ and $g=d$;
\item $a=f,h=b,h=g,g=b$ and $d=c$;
\item $a=f,h=c,h=g,g=c$ and $d=b$;
\item $a=f,d=c,d=b,c=b$ and $h=g$;
\item $a=f,g=d,g=c,d=c$ and $h=b$;
\item $a=f,g=d,h=g,h=d$ and $c=b$;
\item $h=b,h=c,c=b,h=g,g=b$ and $g=c$;
\item $a=f,h=b,h=c,c=b,h=d,d=b$ and $d=c$;
\item $g=b,g=c,c=b,g=d,d=b,d=c,h=b,h=c,h=d$ and $h=g$.
\end{enumerate}
\end{lemma}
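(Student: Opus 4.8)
The plan is to prove all twenty-three cases by the curvature-distribution scheme of Lemmas \ref{lem2}--\ref{lem10}, using one combinatorial reduction to keep the case count small. Each region $\Delta$ of a reduced spherical diagram $K$ is a $9$-gon, so $c(\Delta)=-7\pi+\sum_{v}2\pi/d_{\Delta}(v)$ summed over the nine corners $v_a,\dots,v_i$; hence $c(\Delta)>0$ forces at least four corners of degree $2$. I would first record the ``conflict graph'' on the corners: two adjacent corners can both have degree $2$ in a reduced diagram exactly when the $t$-edge they share carries exponent $-1$, which for $s(t)$ occurs only for the pairs $(v_e,v_f)$ and $(v_i,v_a)$. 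Deleting these two edges from the boundary $9$-cycle leaves the disjoint paths $P_5=(v_a,v_b,v_c,v_d,v_e)$ and $P_4=(v_f,v_g,v_h,v_i)$, and a set of corners can be simultaneously of degree $2$ precisely when it is independent in $P_5\sqcup P_4$. A positively curved region thus requires an independent set of size at least four among the corners that the given relations render admissible via the list $S$.

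With this reduction I would dispose of the degenerate cases by counting alone. In cases (21) and (23) no mixed relation $a=f$ or $e=i$ is present and only cluster identifications among $\{b,c,d,g,h\}$ occur, so the admissible corners lie in $P_5\cap\{v_b,v_c,v_d\}$ and $P_4\cap\{v_g,v_h\}$, whose largest independent sets have sizes at most $2$ and $1$; hence at most three corners have degree $2$ and $c(\Delta)\le c(2,2,2,3,3,3,3,3,3)=0$ (with (21) giving even $c(2,2,3,3,3,3,3,3,3)=-\pi/3$). For every other case the admissible set meets both paths, and I would list the maximal independent sets of size four, each a product of a maximal admissible pair in $P_5$ with one in $P_4$. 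For case (1), $a=f,e=i,h=g$, the admissible corners are $\{v_a,v_e,v_f,v_g,v_h,v_i\}$, giving just the three configurations $\{v_a,v_e,v_f,v_i\}$, $\{v_a,v_e,v_f,v_h\}$ and $\{v_a,v_e,v_g,v_i\}$; the first reproduces the region of Lemma \ref{lem2} verbatim and the relation $h=g$ only constrains the later choices, the pattern by which every case here refines an earlier lemma.

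The engine inside each configuration is label-propagation followed by a curvature transfer, as in Lemma \ref{lem2}. Fixing the four degree-$2$ labels, I read each remaining corner off the star graph as a word $(\text{generator})(\text{middle})(\text{generator})$ and use the relations to confine its middle letter to a four-element set, as in $l_{\Delta}(v_g)=g\{a,a^{-1},f,f^{-1}\}b^{-1}$; this yields a four-way split on that letter, now linked across corners by the new relation (e.g.\ $h=g$ ties the admissible value at $v_g$ to that at $v_h$). In each branch one of two things occurs: either a forced middle letter yields a torsion relation or trivializes one of $a,e,f,i$---both excluded since $G$ is torsion free and $a,e,f,i\in G\setminus\{1\}$---which pins two corners to degree $>3$ and gives $c(\Delta)\le c(2,2,3,3,3,3,3,3,4)=-\pi/2<0$ outright; or $c(\Delta)$ is bounded by $\pi/3$ or $\pi/6$ and I add it across the designated $t^{-1}$-edge to the unique neighbour $\widehat{\Delta}$, two of whose mixed corners are then forced to degree $2$ while the rest stay $\ge 3$, so that $c(\widehat{\Delta})\le c(2,2,3,3,3,3,3,3,3)=-\pi/3$ and $c(\Delta)+c(\widehat{\Delta})\le 0$.

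I expect the main obstacle to be the bookkeeping around $\widehat{\Delta}$: in each branch I must confirm that the two corners assumed low in $\widehat{\Delta}$ are genuinely forced---this is where the ``either $d_{\widehat{\Delta}}(v_x)=3$ or $d_{\widehat{\Delta}}(v_y)=2$'' dichotomies from the proofs of Lemmas \ref{lem8} and \ref{lem9} reappear and must be re-verified under the present relations---and that distinct positive regions never share a $\widehat{\Delta}$, which is routine because the transfer always crosses a fixed $t^{-1}$-edge and $\mathcal{P}$ is orientable. Having set up case (1) this way, cases (2)--(20) differ only in which admissible pairs are selected in $P_5$ and $P_4$, while (21)--(23) follow from the counting bound; I would therefore give case (1) in full, the degenerate cases by the size count, and the remainder by reference to the same mechanism.
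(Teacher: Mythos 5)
Your organizing claim---that two adjacent corners of $\Delta$ can both have degree $2$ exactly when the $t$-edge between them carries exponent $-1$, so that the degree-$2$ corners always form an independent set in $P_5\sqcup P_4$---is false, and the paper's own lemmas are counterexamples. In Lemma \ref{7} (the case $a=f$, $g=b$) the positively-curved candidate region has $l_{\Delta}(v_a)=af^{-1}$, $l_{\Delta}(v_b)=bg^{-1}$, $l_{\Delta}(v_f)=fa^{-1}$, $l_{\Delta}(v_g)=gb^{-1}$: the adjacent pairs $(v_a,v_b)$ and $(v_f,v_g)$, each sharing a $t$-edge of exponent $+1$, all have degree $2$. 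Likewise in Lemma \ref{lem8} (the case $h=c$, $g=b$) the adjacent pairs $(v_b,v_c)$ and $(v_g,v_h)$ all have degree $2$. The reason your dichotomy fails is that a neighbouring region traverses the shared edge with the opposite orientation, so a $+1$ edge of $\Delta$ is a $-1$ edge of the neighbour; both endpoints can then have degree $2$ whenever the neighbour's two corners are consecutive letters of $s(t)^{-1}$ and pair admissibly with the corresponding corners of $\Delta$. Working this out against $S$, the exceptional adjacencies are: $(v_a,v_b)$ and $(v_f,v_g)$ when $a=f$ and $b=g$; $(v_b,v_c)$ and $(v_g,v_h)$ when $c=h$ and $b=g$; $(v_c,v_d)$ and $(v_g,v_h)$ when $d=h$ and $c=g$; $(v_d,v_e)$ and $(v_h,v_i)$ when $e=i$ and $d=h$; and $(v_b,v_c)$, $(v_c,v_d)$ when $b=c=d$. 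The ``cannot occur together'' assertions you extrapolated from Lemmas \ref{lem3} and \ref{lem4} are case-specific facts (those relation pairs do not hold there), not a general principle.

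This breaks your proof exactly where it does real work. The counting disposal of cases (21) and (23) fails: in case (21) we have $b=c=g=h$, so both $h=c$ and $g=b$ hold, the Lemma \ref{lem8} configuration $d(v_b)=d(v_c)=d(v_g)=d(v_h)=2$ is available, and then $c(\Delta)\leq c(2,2,2,2,3,3,3,3,3)=\pi/3>0$, not $-\pi/3$ as you claim; case (23) similarly admits four degree-$2$ corners. These cases therefore require the full curvature-transfer mechanism, not counting. Moreover, your enumeration of four-element configurations in the remaining cases is incomplete wherever an exceptional pair of relations holds: cases (4), (9), (14), (16) contain $a=f$, $g=b$ and hence the Lemma \ref{7} configuration $\{v_a,v_b,v_f,v_g\}$; cases (12), (18), (22) contain $b=c=d$; case (23) contains several such pairs. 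These are precisely the configurations your independence criterion excludes, and they are where most of the transfer work in Lemmas \ref{7} and \ref{lem8} is actually spent. The paper's (terse) proof of this lemma, ``similar to Lemmas \ref{lem2}--\ref{lem10},'' implicitly includes these regions; any completion along your lines must first repair the conflict graph to admit the exceptional adjacencies listed above, after which the case analysis is substantially larger than the one you outline.
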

\begin{proof} The proof of these cases are similar to the proofs of Lemmas \ref{lem2}-\ref{lem10}. 
\end{proof}
Now we state the main result of the paper.
\begin{theorem}\label{th}
The presentation $\mathcal{P}=\langle G,t~|~s(t)\rangle$ where $s(t)=atbtctdtet^{-1}ftgthtit^{-1}$ is aspherical modulo some exceptional cases which are given below:
\end{theorem}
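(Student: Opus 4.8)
The plan is to establish the asphericity of $\mathcal{P}$ by ruling out reduced spherical diagrams, resorting to the exceptional list only where this is unavoidable. First I would invoke Gauss--Bonnet: if $K$ is a reduced spherical diagram over $\mathcal{P}$ then its regions carry total curvature $4\pi$, so $K$ contains a region of positive curvature. By part (1) of the Remark, a region all of whose admissible cycles have length greater than $2$ satisfies $c(\Delta)\le -\pi$; hence positive curvature forces some vertex of $\Delta$ to have degree $2$, i.e.\ forces an admissible label from $S$. It therefore suffices to analyse, for every possible set of admissible labels, whether the positive curvature can be cancelled by curvature distribution or, better, whether a global aspherical weight function exists.

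The organising device is the correspondence between admissibility and equalities among the coefficients: $af^{-1}$ is admissible exactly when $a=f$, $hb^{-1}$ exactly when $h=b$, and so on for the remaining members of $S$. I would classify by the number $N$ of admissible labels, using the Remark to prune the count. Parts (2)--(4) show that among $af,af^{-1},ei,ei^{-1}$ at most one of $a=f,\,a=f^{-1}$ and at most one of $e=i,\,e=i^{-1}$ can hold, which yields the principal dichotomy $a=f^{-1}$ versus $a=f$; parts (5)--(14) are transitivity relations that collapse the admissible labels among $\{b,c,d,g,h\}$ (recall $b=1$) into an equivalence relation, so that only the resulting partition of these five elements matters.

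In the branch $a=f^{-1}$ the analysis is uniform: the substitution $x=t^{-1}a^{-1}t$ produces the star graph of Figure~\ref{wht}, on which the weight function of Lemma~\ref{wt} is aspherical, and this single construction disposes of every subcase listed there. In the complementary branch $a=f$, together with the cases governed only by equalities in $\{b,c,d,g,h\}$, the weight test fails and I would appeal to curvature distribution via Lemmas~\ref{lem2}--\ref{lem11}. For each admissible configuration one locates the positively curved region $\Delta$, bounds $c(\Delta)$ by $\tfrac{\pi}{3}$ or $\tfrac{\pi}{6}$ according to how many of its vertices are forced to degree $2$, and exhibits the associated neighbour $\widehat{\Delta}$ whose remaining vertices are pushed to degree $\ge 3$ (often to degree $\ge 4$, since the alternative would force in $G$ a relation such as $a=1$, $f=1$, or $g^{2}=1$, each impossible because $G$ is torsion free and $a,e,f,i\neq 1$); the recorded bounds $c(2,2,3,3,3,3,3,3,3)=-\tfrac{\pi}{3}$ and $c(2,2,3,3,3,3,3,3,4)=-\tfrac{\pi}{2}$, etc., then give $c(\Delta)+c(\widehat{\Delta})\le 0$.

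Assembling these lemmas, every admissible configuration yields an aspherical presentation except those in which the equalities force a positively curved region each of whose candidate neighbours is itself positively curved, so that no valid pairing $\Delta\mapsto\widehat{\Delta}$ and no aspherical weight function survive; these residual patterns are precisely the exceptional cases recorded below. I expect the main obstacle to be the completeness of the enumeration rather than any individual region computation: with $|S|=14$ the raw number of admissible subsets is very large, and the crux is to verify that the reductions of the Remark together with Lemmas~\ref{wt} and \ref{lem2}--\ref{lem11} account for every partition of $\{a,f,e,i\}$ and of $\{b,c,d,g,h\}$, leaving exactly the stated exceptions and no others.
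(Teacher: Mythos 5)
Your proposal follows essentially the same route as the paper: the Bogley--Pride criterion reduces everything to asphericity, the $a=f^{-1}$ branch is dispatched by the weight test of Lemma~\ref{wt} via the substitution $x=t^{-1}a^{-1}t$, the $a=f$ branch and the configurations involving only $\{b,c,d,g,h\}$ are handled by the curvature-distribution Lemmas~\ref{lem2}--\ref{lem11}, and the bookkeeping by admissible labels, the Remark's reductions, and the declared equivalences is exactly the paper's, with the uncovered configurations left as the exceptional list. The one small caveat is your claim that a ``single construction'' settles the whole $a=f^{-1}$ branch: each subcase of Lemma~\ref{wt} in fact needs its own (similar) weight-function check, since additional admissible cycles must also receive weight at least $2$ --- but this matches the paper's own ``the proof of the remaining cases follows similarly.''
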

\begin{enumerate}
\item $a=f, e=i$ and $R \in\{hc^{-1},hd^{-1},dc^{-1}\}$;
\item $a=f, h=d$ and $R \in\{gb^{-1},gc^{-1}\}$;
\item $a=f, h=c, g=b$;
\item $a=f, e=i, h=b$ and $R \in \{gc^{-1},dc^{-1}\}$;
\item $a=f, e=i, h=c$ and $R \in \{gb^{-1},db^{-1}\}$;
\item $a=f, e=i, h=d$ and $R \in \{gb^{-1},cb^{-1}\}$;
\item $a=f, e=i, h=g, d=c$;
\item $a=f, g=d, g=b, d=b$; 
\item $a=f, e=i, h=b, h=c, c=b$;
\item $a=f, e=i, h=b, h=d, d=b$;
\item $a=f, e=i, h=b, h=g, g=b$;
\item $a=f, e=i, h=c, h=d, d=c$;
\item $a=f, e=i, h=c, h=g, g=c$;
\item $a=f, e=i, d=c, d=b, c=b$;
\item $a=f, h=b, h=d, d=b, g=c$; 
\item $a=f, h=c, h=d, d=c, g=b$; 
\item $a=f, g=d, g=b, d=b, h=c$;
\item $a=f, g=c, g=b, c=b, h=d$;  
\item $a=f, e=i, h=b, h=c, c=b, g=d$;
\item $a=f, e=i, h=b, h=d, d=b, g=c$;
\item $a=f, e=i, h=b, h=g, g=b, d=c$;
\item $a=f, e=i, h=c, h=d, d=c, g=b$;
\item $a=f, e=i, h=c, h=g, g=c, d=b$;
\item $a=f, e=i, d=c, d=b, c=b, h=g$;
\item $a=f, h=b, h=d, d=b, h=g, g=b, g=d$;
\item $a=f, h=b, h=c, c=b, h=g, g=b, g=c$;
\item $a=f, g=d, g=c, d=c, g=b, d=b, c=b$;
\item $a=f, g=d, g=c, d=c, h=g, h=d, h=c$;
\item $a=f, e=i, h=b, h=c, c=b, h=d, d=b, d=c$;
\item $a=f, e=i, h=b, h=d, d=b, h=g, g=b, g=d$;
\item $a=f, e=i, h=b, h=c, c=b, h=g, g=b, g=c$;
\item $a=f, g=b, g=c, c=b, g=d, d=b, d=c, h=b, h=c, h=d, h=g$;
\item $a=f, e=i, g=b, g=c, c=b, g=d, d=b, d=c, h=b, h=c, h=d, h=g$.
\end{enumerate}
\begin{proof}
The proof of the Theorem follows by Lemmas \ref{wt}-\ref{lem11}.
\end{proof}
\begin{remark}
\normalfont
We remark that the list of exceptional cases given in Theorem \ref{th} is open. In fact, weight test and curvature distribution can not be applied to these cases to prove  Levin conjecture. Therefore, some new methods need to be developed to establish the validity of Levin conjecture for this group equation of length 9.  
\end{remark}

\end{document}